\pdfoutput=1
\documentclass[11pt, a4paper]{article}
\usepackage[utf8]{inputenc}
\usepackage{amsmath,amssymb,amsfonts,dsfont,yhmath}
\usepackage{amsthm}
\usepackage{mathtools}
\usepackage{verbatim}
\usepackage{a4wide}
\usepackage{epsfig,epic,eepic,graphicx,booktabs}
\usepackage[mathcal]{euscript}
\usepackage{mathrsfs} 
\usepackage{import}

\usepackage{xifthen}

\usepackage{marginnote} 
\reversemarginpar
\usepackage[usenames,dvipsnames]{xcolor}

\usepackage[unicode, bookmarks, colorlinks, breaklinks, pagebackref]{hyperref}  
\hypersetup{linkcolor=OliveGreen,citecolor=OliveGreen,filecolor=black,urlcolor=Magenta}

\usepackage[capitalize,nameinlink,noabbrev]{cleveref}
\usepackage{cite}

\usepackage{enumitem}

\newtheorem{theorem}{Theorem}[section]
\newtheorem{lemma}[theorem]{Lemma}
\newtheorem{proposition}[theorem]{Proposition}
\newtheorem{assume}{Assumption}

\newtheorem{example}{Example}

\newtheorem{remark}[theorem]{Remark}

\numberwithin{equation}{section}
\makeatletter

\@addtoreset{equation}{section}
\makeatother



\newcommand{\ZZ}{{\mathbb Z}}

\newcommand{\RR}{{\mathbb R}}
\newcommand{\CC}{{\mathbb C}}






\newcommand{\email}[1]{\href{mailto:#1}{\texttt{#1}}}



\newcommand{\per}{\mathrm{per}}



\newcommand{\abs}[1]{\left\lvert #1\right\rvert}

\newcommand{\norm}[1]{\left\lVert #1\right\rVert}


\newcommand{\cv}[1][]{%
\ifthenelse{\isempty{#1}}{\xrightarrow[\hphantom{~2~}]{}}{\xrightarrow[\hphantom{~2~}]{#1}}%
}
\newcommand{\wcv}[1][]{%
\ifthenelse{\isempty{#1}}{\xrightharpoonup[\hphantom{~2~}]{}}{\xrightharpoonup[\hphantom{~2~}]{#1}}%
}


\newcommand{\calB}{\mathcal{B}}

\newcommand{\calD}{\mathscr{D}}

\newcommand{\calF}{\mathcal{F}}

\newcommand{\opH}{\mathcal{H}}
\newcommand{\opK}{\mathcal{K}}
\newcommand{\opM}{\mathcal{M}}
\newcommand{\ftf}{\mathscr{F}}

\def\eps{\varepsilon}
\def\Rm{\RR}
\def\Cm{\CC}
\def\rP{{\rm P}}

\DeclareMathOperator{\spec}{spec}

\DeclareMathOperator{\Index}{Index}

\DeclareMathOperator{\di}{d\!}

\DeclareMathOperator{\Div}{{div}}

\DeclareMathOperator{\sign}{{sign}}

\DeclareMathOperator{\dom}{{dom}}


\def\XXint#1#2#3{{\setbox0=\hbox{$#1{#2#3}{\int}$ }
\vcenter{\hbox{$#2#3$ }}\kern-.6\wd0}}


\DeclarePairedDelimiterX\Set[1]\{\}{%
  #1%
}
\allowdisplaybreaks


\begin{document}


\title{Topological Anderson Insulators by homogenization theory}

\author{
  Guillaume Bal\footnote{Departments of Statistics and
    Mathematics and Committee on Computational and Applied
    Mathematics, University of Chicago, Chicago, IL 60637
    (\email{guillaumebal@uchicago.edu}). }
  \and
  Thuyen Dang\footnote{Department of Statistics and Committee on
    Computational and Applied Mathematics, University of Chicago,
    Chicago, IL 60637, USA (\email{thuyend@uchicago.edu}).}  }

\date{}

\maketitle

\begin{abstract}
  A central property of (Chern) topological insulators is the presence of robust asymmetric transport along interfaces separating two-dimensional insulating materials in different topological phases. A Topological Anderson Insulator is an insulator whose topological phase is induced by spatial fluctuations. 
  
This paper proposes a mathematical model of perturbed Dirac equations and shows that for sufficiently large and highly oscillatory perturbations, the systems is in a different topological phase than the unperturbed model. In particular, a robust asymmetric transport  indeed appears at an interface separating perturbed and unperturbed phases.

The theoretical results are based on careful estimates of resolvent operators in the homogenization theory of Dirac equations and on the characterization of topological phases by the index of an appropriate Fredholm operator.

\end{abstract}

\noindent{\bf Keywords:} Topological insulators, Anderson TI, homogenization theory, Dirac operators, asymmetric transport, Fredholm operator
\\[5mm]
\noindent{\bf AMS:} 35B20, 35B27, 35B35, 35B40, 47A53. 


%
\section{Introduction}
\label{sec:intro}

Topological Anderson Insulators are insulators whose topological phase is induced by randomness \cite{liTopologicalAndersonInsulator2009}. A theoretical explanation based on effective medium theory for such phenomena was proposed in \cite{grothTheoryTopologicalAnderson2009}. Their starting point is a randomly perturbed system of Dirac equations.

This paper considers a similar system of Dirac equations, given explicitly by \eqref{eq:1} below, that includes periodic perturbations of the form $\varepsilon^{-1}V(\eps^{-1} x)$ with $V$ a matrix-valued periodic potential and $x=(x_1,x_2)\in\Rm^2$ spatial coordinates. Using homogenization theory, we show that in the limit $0<\eps\to0$, the system converges to a homogenized Dirac equation in a different topology from the unperturbed system formally obtained as $\eps\to\infty$.

Arguably the most unexpected feature of topological insulators is the asymmetric transport observed at interfaces separating insulators in different topological phases. An interface in our system is modeled by a domain wall $\rho(x)=\rho(x_2)$ and perturbations of the form $\rho(x)\varepsilon^{-1}V(\eps^{-1} x)$.  This allows us to define a transition from the unperturbed system where $\rho(x_2)=0$ when $x_2\geq1$ to a perturbed system where $\rho(x_2)=1$ when $x_2\leq-1$. We will show that the interface Hamiltonian with such a domain wall indeed carries robust asymmetric transport for $0<\eps<\eps_0$ sufficiently small while no such robust asymmetric transport exists for $\eps>\eps_1>0$ sufficiently large. This surprising result indicates that while the map $\eps\to \rho(x)\varepsilon^{-1}V(\eps^{-1} x)$ appears to be smooth in some metrics, it is necessarily discontinuous or not globally defined in the sense of asymmetric transport.

\medskip

The invariants mentioned above are defined for large classes of topological insulators. See, e.g., \cite{ASS90,bellissard1994noncommutative,bernevig2013topological,bourne2017k,chiu2016classification,delplace,kitaev2009periodic,moessner2021topological,prodan2016bulk,thiang2016k,Volovik,WI} for general references from the mathematical and physical literature on the broad and actively studied field of topological insulators. The robust asymmetric transport observed at one-dimensional interfaces separating distinct two-dimensional insulators has also been analyzed in many contexts.  For many discrete and continuous Hamiltonians, a quantized interface current observable as well as a number of Fredholm operators with non-trivial indices may be introduced to compute topological invariants and relate them to the observed quantized, robust-to-perturbations, asymmetric transport along the interface; see, e.g. \cite{Elbau,prodan2016bulk,SB-2000} for discrete Hamiltonians and \cite{balContinuousBulkInterface2019,bal2022topological,bal2023topological,Drouot:19,Drouot:19b,drouot2020microlocal,Hatsugai,QB22} for partial- or pseudo- differential Hamiltonians.

In this paper, the systems of interest are modeled by Dirac equations acting on spinors (vector-valued functions) of the Euclidean plane $\Rm^2$. The topological invariants we will be using are Chern invariants for the bulk phases. For regularized Dirac operators, their theory is presented in \cite{balContinuousBulkInterface2019}. The corresponding invariants for the interface Hamiltonians modeling a transition between different bulk phases are analyzed in \cite{bal2022topological,bal2023topological,QB22}.

\medskip

The rest of the paper is structured as follows. Section \ref{sec:formulation} formulates the system of Dirac equations with periodic perturbations, defines the notions of bulk and interface invariants, and presents our main results. In particular, we first derive a homogenization result for the Dirac equation with periodic potential. The limiting homogenized operator is then shown to be in a different bulk topological phase than the unperturbed operator. Estimates on the resolvent operator in the homogenized limit then allow us to state our main result, namely that for $\eps$ sufficiently small, the perturbed Dirac operator with domain wall caries non-trivial edge transport.  The proofs of the main results are then presented in detail in section \ref{sec:main-results}.


\section{Formulation and main results}
\label{sec:formulation}
This section introduces our model and presents our main results. All proofs are postponed to section \ref{sec:main-results}.
\subsection{Dirac model with periodic fluctuations}
\label{sec:notation}
Throughout the paper, we use the following notation.
\begin{itemize}
\item All the functions are assumed to be complex-valued, unless
  stated otherwise.
\item $Y\coloneqq [0,1]^2$ -- the reference unit cell.
\item $L^p_{\per}(Y)$ for some $p \in [1,\infty]$ -- the space of
  $Y$-periodic functions that are $L^p$-integrable.
\item For $f = (f_1, f_2)^{\top}$ and $g = (g_1,g_2)^{\top}$ in
  $L^2(\RR^2;\CC^2)$, we define the inner product 
\begin{align*}
  \left\langle f,g \right\rangle
  = \left\langle
  \begin{pmatrix}
    f_1\\f_2
  \end{pmatrix},
  \begin{pmatrix}
    g_1\\g_2
  \end{pmatrix}
  \right\rangle
  \coloneqq \int_{\RR^2} f_1 \overline{g_1} + f_2 \overline{g_2} \di x,
\end{align*}
and its corresponding norm $\norm{f} \coloneqq \left\langle f,f \right\rangle^{\frac{1}{2}}.$
\item $\ftf$ -- the Fourier transform, with
  $\xi \coloneqq (\xi_1, \xi_2)$ to be the dual variable to
  $x = (x_1,x_2)$, and $\left\langle \xi \right\rangle \coloneqq
  \left( 1 + \abs{\xi}^2 \right)^{\frac{1}{2}}$ to be the Japanese
  bracket.
\item The Einstein summation convention is used whenever applicable;
  $\delta_{ij}$ is the Kronecker delta, and $\epsilon_{ijk}$ is the
  permutation symbol.
\end{itemize}

The Dirac operator is then defined as follows.
Let $\varepsilon > 0$. We consider the following Hamiltonian on
$L^2(\RR^2) \otimes \CC^2 \cong L^2(\RR^2;\CC^2)$:
\begin{align}
\label{eq:1o}
  \opH^{\varepsilon} = \left( D + (U_1^{\varepsilon}, U_2^{\varepsilon})
  \right) \cdot \sigma + (m + \beta \Delta + U_3^{\varepsilon})
  \sigma_3 + U_0^{\varepsilon} \sigma_0,
\end{align}
where $m,\beta$ are real numbers with $|m|, |\beta|$ in $(0,\infty)$,  $\left\{ \sigma_i \right\}$ are Pauli matrices
\begin{align*}
  \sigma_1
  &\coloneqq
    \begin{pmatrix}
      0 &1\\
      1 &0
    \end{pmatrix}, \quad
    \sigma_2
  \coloneqq
    \begin{pmatrix}
      0 &-i\\
      i &0
    \end{pmatrix}, \quad
    \sigma_3
    \coloneqq
    \begin{pmatrix}
      1 &0\\
      0 &-1
    \end{pmatrix}, \quad
    \sigma_0
    \coloneqq \text{Id},\\
  x
  &\coloneqq (x_1, x_2) \in \RR^2,~
  D
  \coloneqq  \frac{1}{i} \left( \partial_{x_1}, \partial_{x_2}
      \right),~
  \sigma
  \coloneqq \left( \sigma_1, \sigma_2 \right),~\Delta \coloneqq
    \partial_{x_1}^2 + \partial_{x_2}^2,
\end{align*}
and 
\begin{align}
\label{eq:4}
  U^{\varepsilon}_j (x) \coloneqq \frac{1}{\varepsilon} \rho (x)
  V_j^{\varepsilon}(x), \quad V_j^{\varepsilon}(x) \coloneqq V_j \left( \frac{x}{\varepsilon} \right),
\end{align}
with $\rho \in  W^{2,\infty}(\RR^2;\RR)$ and $V_j\in
C^{0,\alpha}_{\per}(Y;\RR) $ for some $ \alpha \in (0,1),~ j \in
\left\{ 1,2,3,4 \right\}.$

The form of $U^{\varepsilon}$ is motivated by the work on homogenization with
rapidly oscillating potentials in \cite[Chap. 1,
Sect. 12]{bensoussanAsymptoticAnalysisPeriodic2011}.  Homogenization problems with similar potentials were studied in
\cite{bensoussanAsymptoticAnalysisPeriodic2011,zhangHomogenizationSchrodingerEquation2014,balLimitingModelsEquations2015,balHomogenizationLargeSpatial2010,cancesSecondorderHomogenizationPeriodic2023,goudeyLinearEllipticHomogenization2022a,allaireHomogenizationSchrodingerEquation2005,ducheneScatteringHomogenizationInterface2011}. The scaling of the potential of order $\eps^{-1}$ ensures that the limiting homogenized limit is an order $O(1)$ modification of the unperturbed operator. 

Note that the domain wall is taken into account via the function $\rho$ in \eqref{eq:4}.

\begin{remark}
Except for the perturbations $U_1^{\varepsilon}, U_2^{\varepsilon}, U_3^{\varepsilon}$ and the
specific form of the perturbation $U_0^\eps$, the Dirac model is essentially the same as the one used in \cite{grothTheoryTopologicalAnderson2009}.

Varying the mass term $U^\eps_3$ may be challenging practically. We will show that variations of the magnetic potential $(U_1^\eps,U_2^\eps)$ or variations of the electric potential $U_0^\eps$ are sufficient to generate asymmetric edge transport.
\end{remark}

\subsection{Homogenization theory}
\label{sec:homog-theory}

We following form of $\opH^{\varepsilon}$ is more suitable for
homogenization purpose. Observe
\begin{align*}
  U^{\varepsilon}
  \coloneqq U^{\varepsilon}_1 \sigma_1 + U^{\varepsilon}_2 \sigma_2
  + U^{\varepsilon}_3 \sigma_3 + U^{\varepsilon}_0 \sigma_0
  =
 \frac{1}{\varepsilon}\rho (x) \begin{pmatrix}
    V_0^{\varepsilon} + V_3^{\varepsilon} & V_1^{\varepsilon} - i
                                            V_2^{\varepsilon} \\
    V_1^{\varepsilon} + i V_2^{\varepsilon} & V_0^{\varepsilon} - V_3^{\varepsilon}
  \end{pmatrix},
\end{align*}
so if we let 
\begin{align}
  \label{eq:60}
W \coloneqq \begin{pmatrix}
    V_0 + V_3 & V_1 - i V_2 \\
    V_1 + i V_2 & V_0 - V_3
  \end{pmatrix},
\end{align}
then we can rewrite \eqref{eq:1o} as
\begin{align}
  \label{eq:1}
  \opH^{\varepsilon}
  = D \cdot \sigma + \left( m + \beta \Delta \right) \sigma_3 +
  \frac{1}{\varepsilon} \rho (x) W \left( \frac{x}{\varepsilon} \right).
\end{align}

The unperturbed (regularized) Dirac operator from $L^2(\RR^2;\CC^2)$ to $L^2(\RR^2;\CC^2)$ is given explicitly by
\begin{align*}
  \opH^{\infty}
  \coloneqq D \cdot \sigma + \left( m + \beta \Delta \right) \sigma_3
\end{align*}
together with its domain 
\begin{align*}
  \dom (\opH^{\infty})
  \coloneqq \left\{ f \in L^2(\RR^2;\CC^2)\colon \opH^{\infty} f \in L^2(\RR^2;\CC^2) \right\}.
\end{align*}
We have the result:
\begin{lemma}
\label{lem:main-results-4}
$\dom(\opH^{\infty}) = H^2(\RR^2;\CC^2)$ and  $\opH^{\infty}\colon \dom (\opH^{\infty}) \subset L^2(\RR^2;\CC^2) \to L^2 (\RR^2;\CC^2)$ is self-adjoint.
\end{lemma}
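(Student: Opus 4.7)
The plan is to work on the Fourier side, where $\opH^{\infty}$ becomes a matrix-valued multiplication operator, and then transfer back.

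First, I would compute the symbol of $\opH^{\infty}$. Conjugating by $\ftf$, the operator $D$ becomes multiplication by $\xi$ and $\Delta$ becomes multiplication by $-|\xi|^2$, so
\begin{align*}
\ftf \opH^{\infty} \ftf^{-1} \,=\, M_{\widehat{\opH}^{\infty}},
\qquad
\widehat{\opH}^{\infty}(\xi) \,\coloneqq\, \xi_1 \sigma_1 + \xi_2 \sigma_2 + \bigl(m - \beta |\xi|^2\bigr) \sigma_3.
\end{align*}
Since $\sigma_1, \sigma_2, \sigma_3$ are Hermitian and the coefficients are real, $\widehat{\opH}^{\infty}(\xi)$ is a Hermitian $2\times 2$ matrix for each $\xi \in \RR^2$. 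Using $\sigma_i \sigma_j + \sigma_j \sigma_i = 2 \delta_{ij} \II$, one computes
\begin{align*}
\widehat{\opH}^{\infty}(\xi)^2 \,=\, \Bigl( |\xi|^2 + \bigl(m - \beta|\xi|^2\bigr)^2 \Bigr)\, \II,
\end{align*}
so the operator norm satisfies $\|\widehat{\opH}^{\infty}(\xi)\|_{\mathrm{op}}^2 = |\xi|^2 + (m - \beta |\xi|^2)^2$.

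Next I would establish the two-sided bound $\|\widehat{\opH}^{\infty}(\xi)\|_{\mathrm{op}} \asymp \langle \xi \rangle^2$. Since $m, \beta \neq 0$, the quantity $|\xi|^2 + (m - \beta |\xi|^2)^2$ is continuous in $\xi$, strictly positive at $\xi = 0$ (equal to $m^2 > 0$), and behaves like $\beta^2 |\xi|^4$ as $|\xi| \to \infty$. A compactness argument then yields constants $0 < c \leq C < \infty$ (depending only on $m, \beta$) with
\begin{align*}
c\, \langle \xi \rangle^4 \,\leq\, |\xi|^2 + \bigl(m - \beta |\xi|^2\bigr)^2 \,\leq\, C\, \langle \xi \rangle^4, \qquad \xi \in \RR^2.
\end{align*}

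Then I would identify the domain. By Plancherel and the bound above,
\begin{align*}
f \in \dom(\opH^{\infty}) \;\Longleftrightarrow\; \widehat{\opH}^{\infty}(\xi)\, \widehat f(\xi) \in L^2(\RR^2;\CC^2) \;\Longleftrightarrow\; \langle \xi \rangle^2\, \widehat f(\xi) \in L^2(\RR^2;\CC^2),
\end{align*}
and the last condition is precisely $f \in H^2(\RR^2;\CC^2)$. This proves the first assertion.

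Finally, for self-adjointness, I would invoke the standard fact that if $A(\xi)$ is a measurable Hermitian-matrix-valued function, then the multiplication operator $M_A$ on $L^2(\RR^2;\CC^2)$ with maximal domain $\{g : A g \in L^2\}$ is self-adjoint. Applying this to $A = \widehat{\opH}^{\infty}$ and conjugating by the unitary $\ftf$, self-adjointness of $\opH^{\infty}$ on $H^2(\RR^2;\CC^2)$ follows. The only potential obstacle is confirming the lower bound in the symbol estimate, but this is guaranteed by the standing hypothesis $|m|, |\beta| > 0$, which prevents the symbol from vanishing or growing too slowly.
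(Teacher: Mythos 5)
Your argument is correct, and it follows the paper's basic strategy of diagonalizing $\opH^\infty$ by Fourier transform and exploiting the Clifford-algebra identity $\widehat{\opH^\infty}(\xi)^2 = \bigl(|\xi|^2 + (m-\beta|\xi|^2)^2\bigr)\II$, but the two proofs diverge in execution at both halves of the claim. For the domain identification, you set up the two-sided bound $c\langle\xi\rangle^4 \le |\xi|^2 + (m-\beta|\xi|^2)^2 \le C\langle\xi\rangle^4$ (valid because $m\neq0$ keeps the symbol from vanishing at the origin and $\beta\neq0$ gives quartic growth), which makes $\dom(\opH^\infty)=H^2$ an immediate biconditional; the paper only works out the inclusion $\dom(\opH^\infty)\subset H^2$ by a hands-on estimate (adding multiples of $\|\widehat{\opH^\infty}\hat f\|^2$ to complete a square) and dismisses the reverse inclusion as clear. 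For self-adjointness, the paper verifies the basic criterion directly: it checks density, symmetry, and then constructs preimages under $\opH^\infty\pm i$ on the Fourier side to show the range condition $\ran(\opH^\infty\pm i) = L^2$. You instead invoke the general theorem that a maximal multiplication operator by a measurable Hermitian-matrix-valued symbol is self-adjoint and then conjugate by the unitary $\ftf$. Your route is shorter and more conceptual, delegating the range verification to a standard lemma; the paper's route is more self-contained and incidentally produces the explicit resolvent formula $(\widehat{\opH^\infty}\pm i)^{-1} = (\widehat{\opH^\infty}\mp i)\bigl((m-\beta|\xi|^2)^2 + |\xi|^2 + 1\bigr)^{-1}$, which is useful elsewhere. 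Either is a complete proof; just be sure the version of the multiplication-operator theorem you cite covers matrix-valued symbols (not merely scalar ones), which it does in, e.g., Schm\"udgen or Reed--Simon.
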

For each $\varepsilon > 0$,
the multiplication operator $\opM_{U^{\varepsilon}}\coloneqq
U^{\varepsilon} \sigma_0$ is bounded and self-adjoint on
$L^2(\RR^2;\CC^2)$ by the assumptions on $U^{\varepsilon}$, therefore,
by \cite[Proposition 1.6]{schmudgenUnboundedSelfadjointOperators2012}, 
\begin{align*}
  (\opH^{\varepsilon})^{*}
  =(\opH^{\infty}+ \opM_{U^{\varepsilon}})^{*}
  = (\opH^{\infty})^{*} +
  (\opM_{U^{\varepsilon}})^{*}
  =\opH^{\infty} +
  \opM_{\overline{U^{\varepsilon}}}
  = \opH^{\infty} + \opM_{U^{\varepsilon}} = \opH^{\varepsilon},
\end{align*}
or $\opH^{\varepsilon}$ is self-adjoint with $\dom(\opH^{\varepsilon})
= \dom (\opH^{\infty})$. Consequently, the spectrum 
\begin{align}
\label{eq:12}
\spec (\opH^{\varepsilon}) \subset \RR.
\end{align}

To eliminate $\varepsilon^{-1}$ in \eqref{eq:1}, we introducing the auxiliary problems
\begin{align}
\label{eq:5}
\Delta_y \phi_{kl} = W_{kl}, \qquad \phi_{kl} \in H_{\per}^1(Y;\CC)/\RR, ~ 1
  \le k, l \le 2.
\end{align}
Elliptic regularity theory \cite{gilbargEllipticPartialDifferential2001} implies $\phi_{kl} \in H_{\per}^2(Y;\CC)/\RR$. 
Define 
\begin{align}
\label{eq:6}
\Phi_{kl}(y) \coloneqq \nabla_y \phi_{kl} (y) \ \text{ and }\ \Phi_{kl}^{\varepsilon}(x) \coloneqq \Phi_{kl}
  \left( \frac{x}{\varepsilon} \right),\qquad 1
  \le k, l \le 2.
\end{align}
Then 
\begin{align*}
  \Div  \Phi_{kl}^{\varepsilon}(x) 
  = \Div \Phi_{kl} \left( \frac{x}{\varepsilon} \right) =
  \frac{1}{\varepsilon} \Div_y \left( \Phi_{kl}(y) \right) \Big\vert_{y =
  \frac{x}{\varepsilon}} = \frac{1}{\varepsilon} \Div_y \left(
  \nabla_y \phi_{kl} (y) \right)\Big\vert_{y =
  \frac{x}{\varepsilon}} =
  \frac{1}{\varepsilon} W_{kl} \left( \frac{x}{\varepsilon} \right).
\end{align*}
Therefore, 
\begin{align}\label{eq:2}
  U^{\varepsilon} (x) = \frac{1}{\varepsilon}\rho(x) W_{kl} \left( \frac{x}{\varepsilon} \right)
  = \rho(x) \Div \Phi_{kl}^{\varepsilon}(x) =  \Div \left( \rho(x) \Phi_{kl}^{\varepsilon}(x) \right) - \nabla \rho(x) \Phi_{kl}^{\varepsilon}(x)
\end{align}
since 
\begin{align*}
  \Div \left( \rho (x) \Phi_{kl}^{\varepsilon}(x) \right)
  &= \rho(x) \Div \Phi_{kl}^{\varepsilon} (x) + \nabla \rho(x) \cdot \Phi_{kl}^{\varepsilon}
  (x)= \frac{1}{\varepsilon} \rho(x) W_{kl} \left( \frac{x}{\varepsilon} \right) + \nabla \rho(x) \cdot \Phi_{kl}^{\varepsilon}(x).
\end{align*}
\begin{theorem}
  \label{thm:main-results-2}
  Let $\varepsilon > 0$. Let $\lambda > 0$, $z \in \CC$ with $\Re z \in [-\lambda,\lambda]$. There
  exist $\gamma = \gamma (\lambda, m, \beta, \norm{\rho}_{L^{\infty}},
  \norm{W}_{C^{0,\alpha}}) > 0$ and $C = C (\lambda,  m, \beta, \norm{\rho}_{L^{\infty}},
  \norm{W}_{C^{0,\alpha}}) > 0$, both independent of $\varepsilon$,
  such that whenever $\Im z \in [-\gamma, \gamma] \setminus \left\{ 0 \right\}$ the equation
\begin{align}
  \label{eq:7}
  \psi^{\varepsilon,z} \in \dom(\opH^{\infty}),\qquad
 (\opH^{\varepsilon} - z ) \psi^{\varepsilon,z}
  =  f,
\end{align}
has a unique solution $\psi^{\varepsilon,z} \in \dom(\opH^{\infty})$ that
satisfies 
\begin{align}
\label{eq:9}
  \norm{\psi^{\varepsilon,z}}_{H^1}
  \le C\abs{\Im z}^{-1} \norm{f}.
\end{align}

For $1 \le k,l \le 2$, consider the cell problem
\begin{align}
\label{eq:25}
T_{kl} \in H^1_{\per}(Y; \CC), \quad \beta \Delta_{yy} T_{kl} + W_{kl} = 0.
\end{align}
Define
\begin{align*}
T(y) \coloneqq \begin{pmatrix}
    T_{11}(y) & T_{12}(y)\\
    T_{21}(y) & T_{22} (y)
  \end{pmatrix}, \qquad y \in Y.
\end{align*}
Then as $\varepsilon \to 0$, the solution $\psi^{\varepsilon,z}$ of \eqref{eq:7} weakly converges to
$\psi^{s,z}$  in $H^1(\RR^2;\CC^2)$ such that 
\begin{align}
\label{eq:23}
(\opH^{0}-z) \psi^{s,z} = f,
\end{align}
where the homogenized operator $\opH^0$ is defined by
\begin{align}
\label{eq:24}
  \opH^{0}
  &\coloneqq
   D \cdot \sigma + \left(m + \beta \Delta\right)
    \sigma_3 + \beta \rho^2(x) \tau,\\
  \label{eq:30}
  \tau
  &\equiv
    \begin{pmatrix}
      \tau_{11}&\tau_{12}\\
      \tau_{21} & \tau_{22}
    \end{pmatrix} \coloneqq \frac{1}{\beta}\int_Y W(y) \sigma_3 T(y) \di y
    \nonumber \\
  &= 
    \int_Y
    \begin{pmatrix}
      \abs{\nabla T_{11}}^2 - \nabla T_{21} \nabla T_{12} & \nabla T_{11}
    \nabla T_{12} -\nabla T_{12} \nabla T_{22}\\
      \nabla T_{11} \nabla T_{21} - \nabla T_{21} \nabla T_{22} &
   -\abs{\nabla T_{22}}^2 +  \nabla  T_{12} \nabla T_{21}
    \end{pmatrix}
    \di y.
\end{align}
\end{theorem}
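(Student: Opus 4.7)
The proof naturally splits into the resolvent estimate and the passage to the homogenized limit. For the first, start from the self-adjointness of $\opH^\eps$ established in the lines preceding the theorem, which yields the $L^2$ bound $\|\psi^{\eps,z}\| \le |\Im z|^{-1}\|f\|$ immediately. To upgrade to $H^1$, I would pair the resolvent equation with $\sigma_3\psi^{\eps,z}$ and take real parts. The anticommutation $\{D\cdot\sigma,\sigma_3\}=0$ kills the first-order piece up to a purely imaginary contribution; integration by parts gives $\Re\langle\beta\Delta\sigma_3\psi,\sigma_3\psi\rangle = -\beta\|\nabla\psi\|^2$; and the rough potential is controlled uniformly in $\eps$ by rewriting $U^\eps$ via \eqref{eq:2} and transferring the divergence onto the test vector, leaving only the $L^\infty$-bounded factor $\Phi^\eps$, whose norm is controlled by $\|W\|_{C^{0,\alpha}}$ through elliptic regularity for \eqref{eq:5}. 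A Cauchy–Schwarz with a small parameter $\delta<|\beta|$ absorbs $\delta\|\nabla\psi\|^2$; since $|z|\le\lambda+\gamma$ on the strip of interest, combining with the $L^2$ bound delivers \eqref{eq:9} for $\gamma$ chosen small enough.

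The uniform $H^1$ bound provides weak compactness; extract a subsequence $\psi^{\eps,z}\rightharpoonup\psi^{s,z}$ in $H^1$. To identify $\psi^{s,z}$ as a solution of \eqref{eq:23}, I test $(\opH^\eps-z)\psi^{\eps,z}=f$ against $\phi\in C^\infty_c(\RR^2;\CC^2)$ and pass to the limit. The Dirac, mass, $\beta\Delta$ (with the Laplacian moved onto $\phi$), and $z\psi$ terms pass trivially by weak $H^1$ convergence combined with Rellich compactness on $\supp\phi$. Only the potential term is delicate. Using \eqref{eq:2} together with \eqref{eq:6}, write
\begin{align*}
\langle U^\eps\psi^{\eps,z},\phi\rangle = -\int \rho\,\Phi^\eps_{kl}\cdot\nabla(\psi^{\eps,z}_l\overline{\phi_k})\,\di x - \int (\nabla\rho\cdot\Phi^\eps_{kl})\,\psi^{\eps,z}_l\overline{\phi_k}\,\di x.
\end{align*}
Since $\Phi^\eps_{kl}$ weakly-$*$ converges to $\int_Y \Phi_{kl}\,\di y = 0$ in $L^\infty$, every subterm pairing $\Phi^\eps$ with $\psi^{\eps,z}_l$ (strong in $L^2_{\loc}(\supp\phi)$ by Rellich) vanishes. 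The remaining obstruction is the single subterm $\int \rho\,(\Phi^\eps_{kl})_i\,(\partial_i \psi^{\eps,z}_l)\,\overline{\phi_k}\,\di x$, a product of two only-weakly-convergent sequences.

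Handling this product is the heart of the proof and the main obstacle. I would extract its limit using the two-scale structure induced by \eqref{eq:25}: the ansatz $\psi^{\eps,z}\approx\psi^{s,z}+\eps\rho(x)\sigma_3 T(x/\eps)\psi^{s,z}$ is designed precisely to cancel the $\eps^{-1}$ singularity in $\opH^\eps\psi^{s,z}$, so the two-scale limit of $\nabla\psi^{\eps,z}$ is $\nabla\psi^{s,z}(x)+\rho(x)\sigma_3(\nabla_y T)(y)\psi^{s,z}(x)$. Either via Nguetseng–Allaire two-scale convergence applied to $\nabla(\psi^{\eps,z}-\eps\rho(x)\sigma_3 T(x/\eps)\psi^{s,z})$, or equivalently via Tartar's oscillating test function $\phi^\eps(x)=\phi(x)-\eps\rho(x) T^*(x/\eps)\sigma_3\phi(x)$ combined with the self-adjointness of $\opH^\eps$, the offending product yields in the limit $\int \rho^2(x)(\sigma_3)_{ll'}\psi^{s,z}_{l''}(x)\overline{\phi_k(x)}\int_Y \nabla_y T_{kl}(y)\cdot\nabla_y T_{l'l''}(y)\,\di y\,\di x$. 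The integration-by-parts identity $\int_Y W\sigma_3 T\,\di y=\beta\int_Y \nabla T\cdot\sigma_3\nabla T\,\di y$ stemming from \eqref{eq:25} then matches this with $\langle\beta\rho^2\tau\psi^{s,z},\phi\rangle$ for $\tau$ exactly as in \eqref{eq:30}, yielding \eqref{eq:23}. Because $\beta\rho^2\tau$ is a bounded Hermitian multiplier, $\opH^0$ is self-adjoint, $(\opH^0-z)$ is boundedly invertible for $\Im z\ne 0$, and $\psi^{s,z}$ is unique, so the whole family converges.
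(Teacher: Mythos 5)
Your proposal follows essentially the same route as the paper: the $L^2$ bound from self-adjointness, an $H^1$ energy estimate obtained by pairing with $\sigma_3\psi^{\eps,z}$ and transferring the divergence in $U^\eps$ onto the test vector via the decomposition \eqref{eq:2} (with the $L^\infty$ bound on $\Phi^\eps_{kl}$ from Schauder estimates for \eqref{eq:5}), and a two-scale/oscillating-test-function passage to the limit that identifies the corrector $\psi^f=\rho\,\sigma_3 T\psi^s$ and the effective tensor $\tau$. Your use of the anticommutation $\{D\cdot\sigma,\sigma_3\}=0$ to discard the first-order term after taking real parts is a mild refinement over the paper's direct absolute-value bound, and the Tartar-style test function $\phi^\eps=\phi-\eps\rho\,T^*(x/\eps)\sigma_3\phi$ is an equivalent alternative to the paper's Nguetseng--Allaire oscillating test functions $\eta^s+\eps\eta^f$, so these differences are presentational rather than substantive.
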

\begin{remark}
\label{sec:limit-absorpt-princ-3}
The homogenization theorem above can be extended to: 
\begin{enumerate}
\item\label{item:1} The case $m = m(x)$ is a smooth compact
  pertubation of a constant, by Kato-Rellich criterion.
\item The case when $\beta \Delta \cdot$ is replaced by $\Div a \left(
    \frac{x}{\varepsilon} \right) \nabla \cdot $ with a bounded
  elliptic matrix $a$. In this case, \eqref{eq:19} is replaced by
  $\Div [a \left( y \right) \nabla T_{kl} (y)] + W_{kl} =0$, and we need two additional cell
  problems $\Div a \left( y \right) [\nabla \chi_i (y) + e_i]=0,$ where
$e_i$ form a canonical basis of $\RR^2$.
\end{enumerate} 
\end{remark}

\subsection{Resolvent estimates}
The above homogenization results, which describe the limiting homogenized operator, are not sufficiently strong to define topological invariants. We require stronger results on the convergence of resolvent operators that we now state. A first result concerns regularity of \eqref{eq:23}, which is necessary for the proof of norm resolvent convergence of $\opH^{\varepsilon}$. 
\begin{lemma}
\label{lem:limit-absorpt-princ-1}
Let
$z \in [-\lambda,\lambda] + i ([-\gamma,\gamma] \setminus \left\{ 0
\right\})$ where $\lambda, \gamma$ are defined in
\cref{thm:main-results-2}. For each $f \in L^2(\RR^2;\CC^2)$, the
solution $\psi^{s,z}$ of \eqref{eq:23} satisfies
\begin{align}
\label{eq:47b}
  \norm{\psi^{s,z}}_{H^2}
  \le C \abs{\Im z}^{-1} \norm{f},
\end{align}
for some $C = C (\lambda,  m, \beta, \norm{\rho}_{L^{\infty}},
  \norm{W}_{C^{0,\alpha}})> 0$. In particular, if $f \in H^1(\RR^2;\CC^2)$,
then $\psi \in H^3(\RR^2;\CC^2)$ and
\begin{align}
\label{eq:48}
  \norm{\psi^{s,z}}_{H^3}
  \le C \abs{\Im z}^{-1} \norm{f}_{H^1}.
\end{align}
\end{lemma}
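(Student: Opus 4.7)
The plan is to bootstrap regularity from the $H^1$-resolvent bound of \cref{thm:main-results-2} by exploiting the elliptic character of the $\beta \Delta \sigma_3$ term in $\opH^0$. All other terms of $\opH^0$ are at most first order, and since $\rho \in W^{2,\infty}(\RR^2;\RR)$, multiplication by $\rho^2 \tau$ is bounded on $H^k(\RR^2;\CC^2)$ for $0 \le k \le 2$. As a preliminary, I would note that the bound $\norm{\psi^{s,z}}_{H^1} \le C\abs{\Im z}^{-1}\norm{f}$ follows from weak lower semicontinuity of the $H^1$ norm under the $H^1$-weak convergence $\psi^{\varepsilon,z}\wcv\psi^{s,z}$ established in \cref{thm:main-results-2}, combined with \eqref{eq:9}.

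For \eqref{eq:47b}, I would rewrite \eqref{eq:23} as
\begin{align*}
  \beta \sigma_3 \Delta \psi^{s,z} = f + z \psi^{s,z} - D\cdot \sigma \psi^{s,z} - m \sigma_3 \psi^{s,z} - \beta \rho^2 \tau \psi^{s,z},
\end{align*}
use $\sigma_3^2 = \II$ to bound $\norm{\Delta\psi^{s,z}} \le C(\norm{f} + \norm{\psi^{s,z}}_{H^1})$, and then apply the standard Fourier-multiplier inequality $\norm{\psi}_{H^2} \le C(\norm{\psi}_{L^2} + \norm{\Delta \psi}_{L^2})$ on $\RR^2$. Since $\abs{\Im z}\le \gamma$, one has $\norm{f} \le \gamma\abs{\Im z}^{-1}\norm{f}$, which lets me absorb the $\norm{f}$ contribution into the $\abs{\Im z}^{-1}$ term and conclude \eqref{eq:47b}.

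For \eqref{eq:48}, I would formally differentiate \eqref{eq:23} in each $x_j$, $j = 1,2$. Because $D\cdot\sigma$, $m\sigma_3$, and $\beta\Delta\sigma_3$ have constant coefficients, the only commutator contribution is $[\partial_j, \beta \rho^2 \tau] = \beta (\partial_j \rho^2)\tau$, yielding
\begin{align*}
  (\opH^0 - z)\, \partial_j \psi^{s,z} = \partial_j f - \beta (\partial_j \rho^2) \tau \psi^{s,z}.
\end{align*}
Since $\partial_j \rho^2 \in W^{1,\infty}$ and $\norm{\psi^{s,z}}_{H^2} \le C\abs{\Im z}^{-1}\norm{f}$ from the previous step, the new source lies in $L^2$ with norm controlled by $C(\norm{f}_{H^1} + \abs{\Im z}^{-1}\norm{f})$. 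Rerunning the argument used for \eqref{eq:47b} on this new equation (now treating $\partial_j\psi^{s,z}$ as the unknown) gives $\norm{\partial_j \psi^{s,z}}_{H^2} \le C\abs{\Im z}^{-1}\norm{f}_{H^1}$; summing over $j$ and combining with \eqref{eq:47b} yields \eqref{eq:48}.

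The main technical point to be careful about is justifying the differentiation of \eqref{eq:23}, since a priori $\partial_j \psi^{s,z}$ is only known to lie in $H^1$. I would handle this with the standard difference-quotient method: apply $D_h^j := h^{-1}(T_h^j - \II)$ (where $T_h^j$ is translation by $h$ in the $x_j$ direction) to \eqref{eq:23}, derive an equation for $D_h^j \psi^{s,z}$ with right-hand side converging in $L^2$ to $\partial_j f - \beta(\partial_j \rho^2)\tau \psi^{s,z}$, establish uniform-in-$h$ $H^2$ bounds on $D_h^j \psi^{s,z}$ via the $H^2$ estimate, and pass to the limit $h \to 0$ using weak compactness in $H^2$ to identify $\partial_j \psi^{s,z} \in H^2$ together with the claimed bound. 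The $W^{2,\infty}$ regularity of $\rho$ is exactly what is needed to control the commutator $[D_h^j, \rho^2 \tau]$ uniformly in $h$.
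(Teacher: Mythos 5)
Your proposal is correct and rests on essentially the same idea as the paper's proof: isolate the elliptic second-order term $\beta\Delta\sigma_3$, bound the remaining (lower-order) terms by the $H^1$ resolvent estimate inherited from \cref{thm:main-results-2}, and bootstrap. The only difference is one of presentation — the paper carries out both steps directly on the Fourier side (rewriting \eqref{eq:23} for $\widehat{\psi^{s,z}}$, deriving the pointwise inequality \eqref{eq:49}, and then multiplying it by $\abs{\xi}$ to gain the extra derivative), whereas you work in physical space, invoke the Fourier-multiplier equivalence $\norm{\psi}_{H^2}\lesssim\norm{\psi}_{L^2}+\norm{\Delta\psi}_{L^2}$, and for \eqref{eq:48} differentiate the equation and control the commutator $[\partial_j,\rho^2\tau]$, rigorously justified by difference quotients. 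Both routes need the same regularity of $\rho$ (the paper's multiplication by $\abs{\xi}$ requires $\norm{\rho^2\tau\psi^{s,z}}_{H^1}$, which like your commutator uses $\rho\in W^{1,\infty}$), so the two arguments are interchangeable; the Fourier-side version is marginally shorter since it sidesteps the difference-quotient machinery.
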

Although elliptic regularity theory
\cite{gilbargEllipticPartialDifferential2001} also implies the bounds
for $\psi^{s,z}$ in $H^2$ and $H^3$ when $f$ is smooth enough, our
\cref{lem:limit-absorpt-princ-1} specifies the dependence on $z$ in
the right hand side of the estimates. This is crucial for the following result:
\begin{theorem}
  \label{thm:limit-absorpt-princ-4}
Let $\lambda > 0$. There exist $C > 0$ and $\varepsilon_0' > 0$,
depending on $\lambda,  m, \beta, \norm{\rho}_{W^{1,\infty}},
\norm{\rho}_{W^{2,\infty}},$ and $\norm{W}_{C^{0,\alpha}}$, such that for any $f
\in L^2(\RR^2;\CC^2)$, $\varepsilon \in (0,\varepsilon_0'),$ and $z \in
\CC$ with $\Re z \in [-\lambda,\lambda],~\Im z \ne 0$, we have 
\begin{align}
\label{eq:29}
\norm{(\opH^{\varepsilon}  - z )^{-1} - (\opH^{0} - z
  )^{-1}}_{L^2 \to L^2} \le C \left( 1+ \abs{\Im z}^{-2} \right) \varepsilon.
\end{align}
\end{theorem}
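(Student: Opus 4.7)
The plan is to construct a two-scale corrector ansatz for $\psi^{\varepsilon} := (\opH^\varepsilon - z)^{-1}f$, compute the residual produced by applying $\opH^\varepsilon - z$, and close the estimate using the resolvent bound \eqref{eq:9} of \cref{thm:main-results-2} together with the $H^2$-bound \eqref{eq:47b} from \cref{lem:limit-absorpt-princ-1}. Throughout, set $\psi^{s,z} := (\opH^0 - z)^{-1}f \in H^2(\RR^2;\CC^2)$.

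For the first-order corrector, define $w_1^\varepsilon(x) := \rho(x)\,\sigma_3\,T(x/\varepsilon)\,\psi^{s,z}(x)$. This choice is motivated by the identity $\varepsilon^{-1}\rho W^\varepsilon = -\varepsilon\beta\rho\Delta T^\varepsilon$ (a rescaling of the cell equation \eqref{eq:25}) combined with $\sigma_3^2 = I$: the chain rule shows that $(m+\beta\Delta)\sigma_3(\varepsilon w_1^\varepsilon)$ produces $-\varepsilon^{-1}\rho W^\varepsilon\psi^{s,z}$ at leading order, which cancels the singular piece of $(\opH^\varepsilon - \opH^0)\psi^{s,z} = \varepsilon^{-1}\rho W^\varepsilon\psi^{s,z} - \beta\rho^2\tau\psi^{s,z}$. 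Simultaneously, the large potential applied to the corrector gives $\varepsilon^{-1}\rho W^\varepsilon\,(\varepsilon w_1^\varepsilon) = \rho^2 W^\varepsilon\sigma_3 T^\varepsilon\psi^{s,z}$, whose cell average equals $\rho^2\beta\tau\psi^{s,z}$ by the very definition \eqref{eq:30} of $\tau$, matching the effective potential of $\opH^0$ at the $Y$-mean level.

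After this first-order cancellation, the residual still carries $O(1)$-size oscillating \emph{mean-zero} contributions of the form $g(x/\varepsilon)h(x)$ with $\int_Y g\,dy = 0$: (i) the cross-derivative terms $2\beta\rho(\nabla_y T)^\varepsilon\cdot\nabla\psi^{s,z}$ and $2\beta\nabla\rho\cdot(\nabla_y T)^\varepsilon\psi^{s,z}$ coming from $\beta\Delta$ hitting $w_1^\varepsilon$; (ii) the Dirac term $D\cdot\sigma(\varepsilon w_1^\varepsilon)$, where $\nabla T^\varepsilon = \varepsilon^{-1}(\nabla_y T)^\varepsilon$ produces an $O(1)$ mean-zero oscillation; and (iii) the fluctuation $\rho^2\bigl[W^\varepsilon\sigma_3 T^\varepsilon - \beta\tau\bigr]\psi^{s,z}$ left over from the potential acting on $w_1^\varepsilon$. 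Each such $g(x/\varepsilon)h(x)$ is cancelled modulo $O(\varepsilon)$ by a second-order corrector $\varepsilon^2 S(x/\varepsilon)h(x)$, where the matrix-valued $S$ solves the torus problem $\sigma_3\beta\Delta_y S = g$ on $Y$ (solvable precisely because $g$ has zero mean): the leading contribution of $(m+\beta\Delta)\sigma_3(\varepsilon^2 Sh)$ is then exactly $g(x/\varepsilon)h(x)$, while the Dirac part and the large potential contribute only $O(\varepsilon)$ corrections. Collecting such correctors into $w_2^\varepsilon$, take the full ansatz
\[
\psi^{\mathrm{ans}} := \psi^{s,z} + \varepsilon w_1^\varepsilon + \varepsilon^2 w_2^\varepsilon.
\]

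Matrix-valued bookkeeping then yields the residual bound $\norm{(\opH^\varepsilon - z)\psi^{\mathrm{ans}} - f}_{L^2} \le C\varepsilon(1 + \abs{z})\norm{\psi^{s,z}}_{H^2}$, with $C$ depending only on $m,\beta,\norm{\rho}_{W^{2,\infty}},\norm{W}_{C^{0,\alpha}}$; the $H^2$-regularity provided by \cref{lem:limit-absorpt-princ-1} is precisely what is needed to dominate the $O(\varepsilon)$ remainders (bounded oscillating coefficients times $\nabla\psi^{s,z}$ or $\Delta\psi^{s,z}$), while the factor $\abs{z}$ records the elementary contribution from $-z(\varepsilon w_1^\varepsilon + \varepsilon^2 w_2^\varepsilon)$. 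Applying \eqref{eq:9} to $\psi^\varepsilon - \psi^{\mathrm{ans}}$, combining with the trivial estimate $\norm{\psi^{\mathrm{ans}} - \psi^{s,z}}_{L^2} \le C\varepsilon\norm{\psi^{s,z}}_{L^2}$, and using \eqref{eq:47b} together with $\abs{z} \le \lambda + \abs{\Im z}$, the triangle inequality produces
\[
\norm{\psi^\varepsilon - \psi^{s,z}}_{L^2} \le C\varepsilon\bigl(\abs{\Im z}^{-1} + \abs{\Im z}^{-2}\bigr)\norm{f}_{L^2} \le C(1 + \abs{\Im z}^{-2})\varepsilon\norm{f}_{L^2},
\]
which is exactly \eqref{eq:29}. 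The main obstacle will be the matrix-valued two-scale algebra: identifying \emph{all} $O(1)$ mean-zero oscillations generated by the Dirac part, the Laplacian, and the large potential acting on $w_1^\varepsilon$, and verifying that they can be simultaneously annulled by a single second-order corrector built from finitely many torus cell problems, so that the closing residual is genuinely $O(\varepsilon)$ in $L^2$.
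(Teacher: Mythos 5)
Your overall strategy --- a two-scale corrector ansatz $\psi^{\rm ans}=\psi^{s,z}+\varepsilon w_1^\varepsilon+\varepsilon^2 w_2^\varepsilon$ with first corrector $\rho\,\sigma_3\,T(x/\varepsilon)\psi^{s,z}$, followed by a residual estimate closed with the $\varepsilon$-uniform resolvent bound \eqref{eq:9} and the regularity of \cref{lem:limit-absorpt-princ-1} --- is exactly the paper's approach. However, there are two genuine gaps.

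First, your claimed residual bound $\norm{(\opH^\varepsilon-z)\psi^{\rm ans}-f}_{L^2}\le C\varepsilon(1+\abs{z})\norm{\psi^{s,z}}_{H^2}$ does not hold as stated. The second-order corrector has the form $w_2^\varepsilon = S(x/\varepsilon)h(x)$ where $h$ is built from $\psi^{s,z}$ \emph{and} $\nabla\psi^{s,z}$ (the cross-derivative terms $2\beta\nabla_y T\cdot\nabla(\rho\psi^{s,z})$ you list in (i) force this). The $\varepsilon^2$-residual $\varepsilon^2 S(\cdot/\varepsilon)\Delta_x h$ then involves $\nabla^3\psi^{s,z}$, so you need $\psi^{s,z}\in H^3$, hence $f\in H^1$; the $H^2$-estimate \eqref{eq:47b} is not enough. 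The paper handles this by first assuming $f\in H^1$ (invoking \eqref{eq:48}) and extending the operator bound to $L^2$ by density at the end; you need an analogous step (or a mollification of $f$ at scale $\varepsilon$) to remove the $\norm{f}_{H^1}$-dependence uniformly in $\varepsilon$.

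Second, and more importantly, your argument only applies for $z\in[-\lambda,\lambda]+i([-\gamma,\gamma]\setminus\{0\})$, since both \eqref{eq:9} from \cref{thm:main-results-2} and \cref{lem:limit-absorpt-princ-1} are proved only for $\abs{\Im z}\le\gamma$ with $\gamma$ \emph{small}. Theorem \ref{thm:limit-absorpt-princ-4} asserts the bound \eqref{eq:29} for \emph{all} $\Im z\neq0$. You cannot simply apply \eqref{eq:9} for $\abs{\Im z}>\gamma$: its proof relies on an absorption step whose constants deteriorate, and the $H^1$-bound for $(\opH^\varepsilon-z)^{-1}$ is not obviously $\varepsilon$-uniform there. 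The paper fills this gap in the final step of its proof via a rescaling $\opH^\varepsilon\mapsto\gamma^{-1}\opH^\varepsilon$ and the resolvent comparison estimate from \cite[Lemma 2.6.1]{daviesSpectralTheoryDifferential1995}, which converts the bound at $z=\pm i\gamma$ into a bound at arbitrary $z$ with $\Re z\in[-\lambda,\lambda]$, at the cost of the $(1+\abs{\Im z}^{-2})$ prefactor in \eqref{eq:29}. Without this step your proof establishes only a restricted version of the theorem.
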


%
\subsection{Topological classification}
\label{sec:topol-interf-curr}

Let $\opH^{\infty}, \opH^{0}$ and $\opH^{\varepsilon}$ be the unperturbed, homogenized, and perturbed operators defined above. To compute the topological invariants, it is more convenient to rewrite $\tau$ in \eqref{eq:30} as 
\begin{align*}
\tau = \tau_1 \sigma_1 + \tau_2 \sigma_2 + \tau_3 \sigma_3 + \tau_0
  \sigma_0, 
\end{align*}
with 
\begin{align}
\label{eq:3}
  \begin{split}
    \tau_0
    &\coloneqq \frac{1}{2} \int_Y \left( \abs{\nabla T_{11}}^2 -
      \abs{\nabla T_{22}}^2 \right) \di y,\\
    \tau_1
    &\coloneqq \frac{1}{2} \int_Y \left( \nabla T_{11} \nabla T_{12} +
      \nabla T_{11} \nabla T_{21} - \nabla T_{12} \nabla T_{22} -
      \nabla T_{21} \nabla T_{22}\right) \di y,\\
    \tau_2
    &\coloneqq \frac{1}{2i} \int_Y \left(- \nabla T_{11} \nabla T_{12} +
      \nabla T_{11} \nabla T_{21} + \nabla T_{12} \nabla T_{22} -
      \nabla T_{21} \nabla T_{22}\right) \di y,\\
    \tau_3
    &\coloneqq \frac{1}{2} \int_Y \left( \abs{\nabla T_{11}}^2 +
      \abs{\nabla T_{22}}^2 - 2 \nabla T_{12} \nabla T_{21} \right)
      \di y.
  \end{split}
\end{align}
It follows from \eqref{eq:24} that 
\begin{align}
\label{eq:52}
  \opH^0
  = (D + \beta \rho^2(x)(\tau_1, \tau_2))\cdot \sigma + (m + \beta \Delta + \beta \rho^2(x)\tau_3)
  \sigma_3 + \beta \rho^2(x)\tau_0 \sigma_0.
\end{align}
In the sequel, we suppose
\begin{assume}[On domain wall]
  \label{sec:topol-class-3}
 There are $a < 0 < b$ so that
\begin{align}
\label{eq:16}
  \rho \in W^{2,\infty}(\RR^2;\RR),~
  \rho(x) = \rho(x_2) = 
  \begin{cases}
    0, \quad x_2 \ge b,\\
    1, \quad x_2 \le a.
  \end{cases}
\end{align}
\end{assume}

We introduce the necessary ingredients to define invariants that
characterize the topological phases of Dirac operators following
\cite{balContinuousBulkInterface2019,bal2022topological,bal2023topological,QB22}. We define and assume
\begin{equation}\label{eq:mpm}
m_+=m\quad \mbox{ and } \quad m_-=m+\beta \tau_3 \qquad \mbox{ with }  \quad |m_+| \geq m_0 \ \mbox{ and } \  |m_-| \geq m_0>0
\end{equation}
for some constant $m_0>0$ that will be specified later.
We denote by $\opH^0_B$ the homogenized bulk (constant coefficient) operator
\[
  \opH^0_B = (D + \beta (\tau_1,\tau_2))\cdot \sigma + (m_-+\beta
  \Delta)\sigma_3 + \beta \tau_0 \sigma_0.
\]

The operator $\opH^0_B$ thus represents the insulator on the lower
half-plane $x_2 \le a$, where the perturbations apply, i.e.,
$\rho (x_2) = 1$. There are no perturbations on the upper half-plane
$x_2 \ge b$, so the insulator on this part is still represented by
$\opH^{\infty}$.  Intuitively, asymmetric transport appears on the
domain wall $\RR\times [a,b]$ if the following conditions are
satisfied:
\begin{enumerate}
\item[\rm (i)]\label{item:6} Both insulators have a common spectral
  gap of at the domain wall, more specifically,
  $\RR \setminus \spec (\opH^{\infty}) \cap \RR \setminus \spec
  (\opH_B^0)$ contains a (non-empty) open interval.
\item[\rm (ii)] $\opH^{\infty}$ and $\opH^0_{B}$ have different `topologies'. The latter are characterized by indices of Fredholm operators that will be introduced shortly.
\end{enumerate}

By definition, we have
\begin{align*}
\opH^0_B = \opH^{\infty} + \beta  \tau.
\end{align*}
The matrix $\tau$ is Hermitian by \eqref{eq:30}, \eqref{eq:25}, and \eqref{eq:60}. Let $p_A(t) \coloneqq \det(t - A)$ denote the characteristic polynomial of a given square matrix $A$.
Direct computations and the fact that $\tau_{12} = \overline{\tau_{21}}$ give
\begin{align*}
  p_{\widehat{\opH^{\infty}}} (s)
  &= s^2 - \left( m - \beta \abs{\xi}^2 \right)^2 - \abs{\xi}^2,\\
  p_{\widehat{\opH^0_{B}}}(s)
  &= s^2 + \beta (\tau_{11} + \tau_{22}) s - \left( \left( m-\beta
    \abs{\xi}^2 \right)^2 + \beta \left( m - \beta \abs{\xi}^2 \right)
    (\tau_{11}-\tau_{22}) - \beta^2 \tau_{11} \tau_{22}\right)\\
  &\qquad- \abs{\xi + \beta \tau_{12}}^2.
\end{align*}

Given $V_j,~1\le j \le 4$, one can compute $\tau$ by the formula
\eqref{eq:30} and obtain the eigenvalues of both $\opH^{\infty}$ and
$\opH^0_B$ as functions of the dual variable $\xi$. The spectrum of
$\opH^{\infty}$ and $\opH^0_{B}$ are the essential ranges of these
functions. By a common shift, we can assume whenever $\opH^{\infty}$
and $\opH^0_B$ have a common spectral gap, it contains 0.

\begin{assume}[Common spectral gap]\label{sec:topol-class-1}
  There exists $m_0 > 0$ such that 
\begin{align}
\label{eq:32}
(-m_0, m_0) \ \ \subset \ \ \RR \setminus \spec (\opH^{\infty}) \ \cap\  \RR \setminus \spec
  (\opH_B^0).
\end{align}
\end{assume}
\begin{example}[Non vanishing electric potential]
\label{sec:common-spectral-gap}
When $V_1 = V_2 = V_3 = 0$ and $V_0 \ne 0$, then \eqref{eq:25} implies
$T_{11} = T_{22}$ and $T_{12}= T_{21} = 0$, so by \eqref{eq:3} we have
$\tau_0 = \tau_1 = \tau_2 = 0$,
$\tau_3 = \int_Y \abs{\nabla T_{11}}^2 \di y = \int_Y \abs{\nabla
  T_{22}}^2 \di y$.  \cref{sec:topol-class-1} is then satisfied. We show below that
asymmetric transport \emph{appears} when $m \beta < 0$ is sufficiently large but \emph{cannot appear} when $m \beta > 0$.
\end{example}

\begin{example}[Non vanishing magnetic potential]
\label{sec:common-spectral-gap-1}
When $V_1 = V_3 = V_0 = 0$ and $V_2 \ne 0$, then $T_{11} = T_{22} = 0$ and
$T_{12} = -T_{21}$. From \eqref{eq:3}, we have
$\tau_0 = \tau_1 = \tau_2 = 0$ and
$\tau_3 = -\int_Y \abs{\nabla T_{12}}^2 \di y$. In this case,
\cref{sec:topol-class-1} holds and asymmetric transport
\emph{appears} when $m \beta > 0$ is sufficiently large while it  \emph{cannot appear} when $m \beta < 0$.
\end{example}





We now establish a different formula for $\tau_3$ to quantify how the pertubations $V_j$ affect the mass term in the homogenized operator
$\opH^0$. Consider the following auxiliary problems:
\begin{align}
\label{eq:57}
t_j \in H_{\per}^1(Y;\CC), \qquad \beta \Delta_{yy} t_j + V_j = 0,
  \qquad j =1,2.
\end{align}
By \eqref{eq:60}, \eqref{eq:25} and uniqueness, we conclude 
\begin{align}
\label{eq:61}
T_{12} = t_1 - it_2 \qquad \text{ and }\qquad T_{21} = t_1 + i t_2.
\end{align}
It follows that 
\begin{align}
\label{eq:62}
\int_Y \nabla T_{12} \nabla T_{21} \di y = \int_Y \nabla (t_1 -
  it_2)\cdot \nabla (t_1 + i t_2) \di y = \int_Y \abs{\nabla t_1}^2 +
  \abs{\nabla t_2}^2 \di y,
\end{align}
so 
\begin{align*}
\tau_3
    &\coloneqq \frac{1}{2} \int_Y \left( \abs{\nabla T_{11}}^2 +
      \abs{\nabla T_{22}}^2 - 2 \left( \abs{\nabla t_1}^2 +
  \abs{\nabla t_2}^2 \right) \right)
      \di y.
\end{align*}

\begin{remark}
  We will justify the following statements:

  Asymmetric transport occurs when $m\beta\tau_3 < 0$ is sufficiently
  large. More precisely, when $m \beta < 0$ (resp. $m \beta > 0$) and
  $\abs{\tau_3}$ is sufficiently large, then perturbations in front of
  $\sigma_3$ or $\sigma_0$ (resp. $\sigma_1$ or $\sigma_2$) change the
  `topology' of the regularized Dirac operator resulting in
  TAI/asymmetric transport. Perturbations in front of
  $(\sigma_1,\sigma_2)$ and $(\sigma_3,\sigma_0)$ have opposite
  effects on the change of topology.
\end{remark}
    
Those statements will be formalized below, and our main result will be
stated in \cref{thm:stab}.

\paragraph{Limiting bulk invariants.}
The invariant characterizing bulk phases \cite{balContinuousBulkInterface2019,prodan2016bulk} is given by
\[
  F[H] = \Pi(H<0) \frac{x_1+ix_2}{|x_1+ix_2|} \Pi(H<0) + I-\Pi(H<0)
\]
where $\Pi(H<0)$ is the orthogonal projector onto the negative spectrum of  a self adjoint operator $H$ gapped at $0$ (i.e., $0\not\in \sigma(H)$).  Following \cite{balContinuousBulkInterface2019}, we have
\begin{proposition}[Bulk invariants] \label{prop:BI} The operators of the bulk Hamiltonians $F[\opH^\infty]$ and $F[\opH^0_B]$ are Fredholm on $L^2(\Rm;\Cm^2)$. Moreover,
\begin{equation}\label{eq:bulkindex}
   {\rm Index}\ F[\opH^\infty] = \frac12(\sign{m_+}+\sign{\beta}),\quad
   {\rm Index}\ F[\opH^0_B] = \frac12(\sign{m_-}+\sign{\beta}).
\end{equation}
\end{proposition}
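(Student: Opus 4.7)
The plan is to reduce both bulk operators to their Fourier symbols, verify they fit the framework of regularized Dirac operators treated in \cite{balContinuousBulkInterface2019}, and then apply the general bulk index theorem established there.

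First, since $\opH^\infty$ and $\opH^0_B$ both have constant coefficients, the Fourier transform diagonalizes them into multiplication operators with matrix-valued symbols
\begin{align*}
  \widehat{\opH}^\infty(\xi) &= \xi_1\sigma_1 + \xi_2\sigma_2 + (m_+-\beta|\xi|^2)\sigma_3,\\
  \widehat{\opH}^0_B(\xi) &= (\xi_1+\beta\tau_1)\sigma_1 + (\xi_2+\beta\tau_2)\sigma_2 + (m_--\beta|\xi|^2)\sigma_3 + \beta\tau_0\,\sigma_0.
\end{align*}
Writing each symbol as $\vec h(\xi)\cdot\vec\sigma + h_0(\xi)\sigma_0$, \cref{sec:topol-class-1} ensures $0\notin\spec(\opH^\infty)\cup\spec(\opH^0_B)$, so the orthogonal projector $\Pi(\cdot<0)$ is well-defined for both. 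The symbols extend continuously to the one-point compactification $S^2$ of $\RR^2$: as $|\xi|\to\infty$, $\vec h(\xi)/|\vec h(\xi)|\to -\sign(\beta)\hat e_3$, independently of direction.

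Second, I would invoke the general bulk Fredholm/index theorem from \cite{balContinuousBulkInterface2019}: for a self-adjoint, constant-coefficient operator of the above form, gapped at $0$ and with $|\vec h(\xi)|\to\infty$ at infinity, the operator $F[H]$ is Fredholm on $L^2(\RR;\CC^2)$ and
\[
  \text{Index}\,F[H] \;=\; \frac{1}{2}\,\deg\!\left(\frac{\vec h}{|\vec h|}\colon S^2 \to S^2\right).
\]
The scalar part $h_0(\xi)=\beta\tau_0$ (constant) is handled by a homotopy $t\mapsto t\beta\tau_0$, $t\in[0,1]$: along this path the gap at $0$ is preserved (the Pauli part alone has a spectral gap around $-\beta\tau_0$, and hence around $0$ after the shift, and continuity of the spectral projector in norm under a compactly perturbed gap is standard). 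Similarly, the shifts $\xi\mapsto \xi+\beta(\tau_1,\tau_2)$ in the symbol of $\opH^0_B$ are orientation-preserving diffeomorphisms of $\RR^2$ and hence do not change the degree.

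Third, I would compute the degrees explicitly. For $\vec h(\xi)=(\xi_1,\xi_2,m_\pm-\beta|\xi|^2)$, the map $\vec h/|\vec h|$ takes the value $\sign(m_\pm)\hat e_3$ at $\xi=0$ and $-\sign(\beta)\hat e_3$ at $\xi=\infty$. Counting preimages of the north pole $\hat e_3$ with sign (or equivalently evaluating the degree integral $\tfrac{1}{4\pi}\int_{\RR^2}\vec h\cdot(\partial_{\xi_1}\vec h\times\partial_{\xi_2}\vec h)\,|\vec h|^{-3}d\xi$), each of $\xi=0$ and $\xi=\infty$ contributes a half-integer winding equal to $\tfrac12\sign(m_\pm)$ and $\tfrac12\sign(\beta)$ respectively, yielding
\[
  \deg(\vec h/|\vec h|) = \sign(m_\pm) + \sign(\beta),
\]
which gives \eqref{eq:bulkindex} after multiplication by $\tfrac12$.

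The main obstacle will be rigorously justifying that the index formula of \cite{balContinuousBulkInterface2019}, originally stated for Dirac-type operators without a scalar $\sigma_0$ component, continues to hold in the presence of the constant scalar term $\beta\tau_0\sigma_0$. This reduces to a homotopy invariance statement for the Fredholm index of $F[H]$ along paths of gapped Hamiltonians; the key input is that the spectral projector $\Pi(H<0)$ varies continuously in the norm topology along such a path (since $H$ has a uniform gap at $0$), and that $F[H]$ depends continuously on this projector. Establishing this continuity carefully—and checking that the pseudodifferential/Fredholm machinery of \cite{balContinuousBulkInterface2019} survives the addition of the bounded scalar symbol—is the only nontrivial step; everything else is a direct computation.
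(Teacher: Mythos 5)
The paper itself gives no proof of this proposition; it is stated as a direct citation of \cite{balContinuousBulkInterface2019} (``Following \cite{balContinuousBulkInterface2019}, we have...''). Your proposal is therefore an attempt to reconstruct the argument from that reference, and your overall skeleton (Fourier symbol, Gauss map $\hat{\vec h}=\vec h/|\vec h|\colon S^2\to S^2$, degree computation, stability under the $\sigma_0$ and $(\tau_1,\tau_2)$ perturbations) is the right one. However, there are two substantive issues.

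First, your degree bookkeeping contains a pair of compensating factor-of-two errors. For the regularized Dirac symbol the correct relations are $\mathrm{Index}\,F[H]=\deg(\hat{\vec h})$ (no extra $\tfrac12$) and $\deg(\hat{\vec h})=\tfrac12(\sign m+\sign\beta)$. You instead write $\mathrm{Index}=\tfrac12\deg$ together with the claim that contributions $\tfrac12\sign m_\pm$ at $\xi=0$ and $\tfrac12\sign\beta$ at $\infty$ ``yield $\deg=\sign m_\pm+\sign\beta$''; the sum of $\tfrac12 a$ and $\tfrac12 b$ is $\tfrac12(a+b)$, not $a+b$. The final answer is right but the two intermediate statements are both wrong.

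Second, and more importantly, the sentence ``the shifts $\xi\mapsto\xi+\beta(\tau_1,\tau_2)$... are orientation-preserving diffeomorphisms of $\RR^2$ and hence do not change the degree'' is not correct as an argument, and the conclusion can in fact fail. The map $\vec h(\xi)=\big(\xi+\beta\tau,\, m_--\beta|\xi|^2\big)$ is \emph{not} the composition of the translation $\xi\mapsto\xi-\beta\tau$ with the unshifted symbol, because the third component is not translated: precomposing by the translation yields $(\xi,\, m_--\beta|\xi-\beta\tau|^2)$, which is a different map. A correct homotopy analysis, say $g_t(\eta)=(\eta,\,m_t-\beta|\eta-t\beta\tau|^2)$ with $m_t$ chosen so that $g_1=\vec h(\cdot-\beta\tau)$ and $g_0$ is in standard form, shows the degree is controlled by the \emph{effective} mass $m'=m_--\beta^3(\tau_1^2+\tau_2^2)$, i.e.\ $\deg(\hat{\vec h})=\tfrac12(\sign m'+\sign\beta)$. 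When $\sign m'\ne\sign m_-$ the formula in \eqref{eq:bulkindex} and your degree claim both fail, and this can occur under the gap Assumption~\ref{sec:topol-class-1} (take $m_-=\beta=1$ and $\beta\tau=(2,0)$, for which $\hat e_3$ lies outside the image of $\hat{\vec h}$ and the degree is $0$, not $1$). In the paper's Examples \ref{sec:common-spectral-gap} and \ref{sec:common-spectral-gap-1} one has $\tau_1=\tau_2=0$, so $m'=m_-$ and the stated formula holds; your proof should at least flag that the claimed invariance under this shift needs an explicit hypothesis such as $\tau_1=\tau_2=0$ or $\beta^3(\tau_1^2+\tau_2^2)<|m_-|$ with compatible sign.

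By contrast, the $\sigma_0$ term is easier than you make it: since $\sigma_0$ commutes with everything, $\Pi(\opH^0_B<0)=\Pi(\vec h\cdot\vec\sigma<-\beta\tau_0)$, and the spectral gap hypothesis forces $|\beta\tau_0|<\inf_\xi|\vec h(\xi)|$, so $-\beta\tau_0$ lies in the gap of $\vec h\cdot\vec\sigma$ and the spectral projector, hence $F[\opH^0_B]$ and its index, are literally unchanged by the $\sigma_0$ shift. No homotopy or continuity-of-projectors argument is required, which disposes of the ``only nontrivial step'' you flag at the end.
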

We recall the index of a Fredholm operator $F$ is dim Ker $F\,-\,$dim Ker $F^*$.
This result shows that the presence of fluctuations changes the
topological bulk properties for the homogenized operator compared to
the unperturbed operator provided $m_+$ and $m_-$  in \eqref{eq:mpm}
have different signs, i.e.,  $m\beta\tau_3 < 0$  is sufficiently large. We have thus generated a topological insulator whose phase is directly related to the presence of the fluctuations $U^\eps$, at least asymptotically in the limit $\eps\to0$.
\begin{remark}
  The result stated in the above proposition holds generally for $m\not=0$ and $\beta\not=0$. As soon as $\sign{m_+m_-}=-1$, we observe a change of bulk topology  ${\rm Index}\ F[\opH^\infty]\not={\rm Index}\ F[\opH^0_B]$. This occurs for $m\beta<0$ when $\tau_3>0$ and for $m\beta>0$ when $\tau_3<0$ for $|\tau_3|$ sufficiently large.
\end{remark}

\paragraph{Limiting interface invariants.} We now consider the
practically more relevant case of an interface invariant
characterizing quantized asymmetric transport along the axis $x_2=0$.
To define a transition between insulators in different phases, we
assume that $\rho$ satisfies \cref{sec:topol-class-3}. 

Recall the definition of $m_0$ in \eqref{eq:mpm}. The constraint $m_0>0$ implies that the bulk Hamiltonians display a spectral gap in the interval $(-m_0,m_0)$. Therefore, excitations in this energy range are constrained to remain in the vicinity of $x_2=0$ while excitations outside of that range may propagate into the bulk. An interface invariant necessarily focuses on the former excitations. We thus define a non-decreasing smooth function $\varphi(h)$ equal to $1$ for $h\geq m_0$ and equal to $0$ for $h\leq -m_0$. In other words, $\varphi'(h)\geq0$ is supported in $(-m_0,m_0)$, integrates to $1$, and may be interpreted as a density of states that will remain localized in the vicinity of the interface $x_2=0$.

More precisely, for $\opH$ an unbounded  self-adjoint operator on $L^2(\RR^2;\CC^2)$, then $\varphi'(\opH)$ is a bounded operator modeling a density of states supported in the energy range $(-m_0,m_0)$ and  $U(\opH)=e^{2\pi i \varphi(\opH)}$ is a unitary operator. The function $S(h)=U(h)-1$ is thus also smooth and compactly supported in $(-m_0,m_0)$.

In \cite{bal2022topological,bal2023topological,QB22} (see also \cite{Elbau,prodan2016bulk,SB-2000}), the following interface invariant is defined
\[
  2\pi \sigma_I[\opH ] = {\rm Tr}\, 2\pi i[\opH,P] \varphi'(\opH).
\]
Here $P(x)=P(x_1)$ is a smooth function equal to $0$ for $x_1\leq x_0-\delta$ and equal to $1$ for $x_1\geq x_0+\delta$ for some $x_0\in\Rm$ and $\delta>0$. The operator $i[\opH,P]$ may be interpreted as a current operator (modeling transport from the left of $x_1=x_0$ to the right of $x_1=x_0$ per unit time) so that the above trace may indeed be interpreted as the expectation of a current operator against the density of states $\varphi'(\opH)$. Any non-vanishing value indicates asymmetric transport. 

Since $\opH$ is elliptic, it is shown in \cite{bal2022topological,QB22} that $2\pi i[\opH,P] \varphi'(\opH)$ is indeed a trace-class operator for the  interface Hamiltonians $\opH=\opH^\infty$ and $\opH=\opH^0$.  In fact, since the unperturbed Hamiltonian $\opH^\infty$ is gapped in $(-m_0,m_0)$, we obviously have that $2\pi \sigma_I[\opH^\infty ] =0$. More precisely, we have \cite{bal2022topological,QB22}
\begin{proposition}[Interface invariants] Under the hypotheses of \cref{prop:BI}, we have 
\begin{equation}\label{eq:interfaceindex}
   2\pi \sigma_I[\opH^\infty] = 0\quad\mbox{ and } \quad
   2\pi \sigma_I[\opH^0] = \frac12(\sign{m_-}-\sign{m_+}).
\end{equation}
\end{proposition}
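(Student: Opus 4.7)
The plan is to handle the two identities separately: the first is a direct consequence of the spectral gap in \cref{sec:topol-class-1}, while the second reduces to an application of the bulk--edge correspondence established in \cite{bal2022topological,bal2023topological,QB22} combined with the bulk indices in \cref{prop:BI}. For $2\pi\sigma_I[\opH^\infty]=0$: since $\spec(\opH^\infty)\cap(-m_0,m_0)=\emptyset$ while $\varphi'$ is, by construction, smooth and supported in $(-m_0,m_0)$, functional calculus gives $\varphi'(\opH^\infty)=0$ as a bounded operator on $L^2(\Rm^2;\Cm^2)$, so $2\pi i [\opH^\infty,P]\varphi'(\opH^\infty)$ is the zero operator (trivially trace-class with zero trace).

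For the second equality, I would first verify that $\opH^0$ given by \eqref{eq:52} fits the class of interface operators analyzed in \cite{bal2022topological,bal2023topological,QB22}. The operator is self-adjoint (by the same argument used for $\opH^\eps$ in the paragraph following \cref{lem:main-results-4}, since $\beta\rho^2 \tau$ is a bounded Hermitian multiplication), elliptic thanks to the regularizing term $\beta\Delta\sigma_3$, translation-invariant in $x_1$, and by \cref{sec:topol-class-3} its coefficients stabilize to those of $\opH^0_B$ for $x_2\leq a$ and of $\opH^\infty$ for $x_2\geq b$; the shared spectral gap containing $\supp\varphi'$ is supplied by \cref{sec:topol-class-1}. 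Under these hypotheses, the bulk--edge correspondence identifies $2\pi\sigma_I[\opH^0]$ with the difference of the two bulk Fredholm indices,
\[
2\pi\sigma_I[\opH^0] \;=\; {\rm Index}\, F[\opH^0_B] \;-\; {\rm Index}\, F[\opH^\infty],
\]
and substituting \cref{prop:BI} yields
\[
2\pi\sigma_I[\opH^0] \;=\; \tfrac12(\sign{m_-}+\sign{\beta}) - \tfrac12(\sign{m_+}+\sign{\beta}) \;=\; \tfrac12(\sign{m_-}-\sign{m_+}),
\]
as claimed.

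The principal obstacle is checking the hypotheses of the bulk--edge correspondence for our specific $\opH^0$, in particular the trace-class property of $2\pi i[\opH^0,P]\varphi'(\opH^0)$. Since $P=P(x_1)$ transitions on a strip $|x_1-x_0|\leq\delta$, the commutator $i[\opH^0,P]$ is a first-order differential operator (receiving contributions from both $D\cdot\sigma$ and $\beta\Delta\sigma_3$) whose coefficients are supported in that strip; combined with off-diagonal decay of $\varphi'(\opH^0)$ in $x_2$ away from the transition region $\{a\leq x_2\leq b\}$ (a consequence of the bulk spectral gaps at $x_2\to\pm\infty$), this yields the trace-class bound. Once this is in place, homotopy invariance of $\sigma_I$ within the class of gapped interface operators allows reduction to the model interface computations carried out in \cite{bal2022topological,QB22}, which produce the index-difference identity used above.
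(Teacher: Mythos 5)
Your proposal is correct and matches the paper's (very brief) treatment: the paper dismisses $2\pi\sigma_I[\opH^\infty]=0$ exactly as you do, noting that $\opH^\infty$ is gapped in $(-m_0,m_0)$ so $\varphi'(\opH^\infty)=0$ by functional calculus, and for $\opH^0$ the paper simply cites \cite{bal2022topological,QB22} (and \cite{balContinuousBulkInterface2019}) for the bulk--edge correspondence that yields $2\pi\sigma_I[\opH^0]={\rm Index}\,F[\opH^0_B]-{\rm Index}\,F[\opH^\infty]$, which combined with \cref{prop:BI} gives $\frac12(\sign{m_-}-\sign{m_+})$. Your additional remarks on verifying the hypotheses (self-adjointness, ellipticity from $\beta\Delta\sigma_3$, $x_1$-translation invariance, coefficient stabilization at $x_2\to\pm\infty$, common gap, trace-class of $i[\opH^0,P]\varphi'(\opH^0)$) are a correct elaboration of what the paper leaves implicit by citation.
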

The references \cite{bal2022topological,QB22} also show that the above interface current is stable against local perturbations of the Hamiltonian. This justifies the robustness of the asymmetric current against perturbations.

\paragraph{Stability of the interface invariant.}
So far, the invariants are only defined in the limits $\eps=0$ and $\eps=\infty$. We now show that invariants remain defined and stable for $\eps$ sufficient close to $0$ and $\eps$ sufficiently large.

Let $\rP(x)={\rm H}(x_1-x_0)$ be a Heaviside function equal to $1$ for $x_1\geq x_0$ and $0$ for $x_1<x_0$. We can then construct the family of bounded operators
\begin{equation}\label{eq:Teps}
   T^{\varepsilon} :=  T[\opH^\eps] = \rP(x) U(\opH^{\varepsilon}) \rP(x) + (I-\rP(x)),\qquad 0 \leq \varepsilon \leq \infty.
\end{equation}
From \cite{bal2022topological,QB22}, we obtain that
\begin{lemma} \label{lem:index}
  At $\varepsilon=\infty$ and $\varepsilon=0$, the operators $T^\infty=T[\opH^\infty]$ and $T^0=T[\opH^0]$ are Fredholm operators. Moreover,
  \[
  {\rm Index}\, T^\infty =2\pi \sigma_I[\opH^\infty] = 0 ,\qquad {\rm
    Index}\, T^0 =2\pi \sigma_I[\opH^0] =
  \frac12(\sign{m_-}-\sign{m_+}). 
  \]
\end{lemma}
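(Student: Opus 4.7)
The plan is to specialize the general Fredholm/bulk--interface framework from \cite{bal2022topological,QB22} to our specific operators $\opH^\infty$ and $\opH^0$. Both Hamiltonians share the same elliptic leading symbol $\beta|\xi|^2\sigma_3$, are self-adjoint on $H^2(\Rm^2;\Cm^2)$ (by \cref{lem:main-results-4} and the self-adjointness computation following \eqref{eq:12}), and differ from standard regularized Dirac operators only by the smooth, bounded zero-order perturbation $\beta\rho^2(x)(\tau_0\sigma_0+\tau_1\sigma_1+\tau_2\sigma_2+\tau_3\sigma_3)$. Hence the abstract hypotheses on ellipticity and local $H^2$ control of resolvents used in \cite{bal2022topological,QB22} are directly available, with the $z$-dependent estimates of \cref{lem:limit-absorpt-princ-1} providing exactly the resolvent bounds needed.

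For $\opH^\infty$, I would first observe that the common gap assumption \cref{sec:topol-class-1} gives $\spec(\opH^\infty)\cap(-m_0,m_0)=\emptyset$, while $\varphi$ takes only the values $0$ and $1$ on $\Rm\setminus(-m_0,m_0)$. Functional calculus then identifies $\varphi(\opH^\infty)$ with the spectral projection $\Pi(\opH^\infty>0)$, so that $U(\opH^\infty)=\exp(2\pi i\,\Pi(\opH^\infty>0))=I$. Consequently $T^\infty=\rP I\rP+(I-\rP)=I$ is trivially Fredholm with vanishing index, matching the value $2\pi\sigma_I[\opH^\infty]=0$.

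The substantive case is $\opH^0$, which is \emph{not} gapped because interface modes generically populate $(-m_0,m_0)$. Fredholmness of $T^0$ must be obtained through commutator analysis. Writing $S(h)=U(h)-1$, smooth and compactly supported in $(-m_0,m_0)$, Helffer--Sj\"ostrand functional calculus represents $S(\opH^0)$ as an integral of the resolvent against an almost-analytic extension of $S$. The commutator $[\rP,S(\opH^0)]$ is then an integral involving $(\opH^0-z)^{-1}[\rP,\opH^0](\opH^0-z)^{-1}$. Since $[\rP,\opH^0]$ is a first-order differential operator with coefficients supported in $\{|x_1-x_0|\leq\delta\}$, and by \cref{lem:limit-absorpt-princ-1} the resolvent maps $L^2$ to $H^2$ locally with controlled growth in $|\Im z|^{-1}$, the integrand lies in a Schatten class uniformly in $\eps$; this yields trace-class bounds on $[\rP,U(\opH^0)]$ and hence Fredholmness of $T^0$.

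The index identification then proceeds along the trace formula $\mathrm{Index}\,T[\opH]=\mathrm{Tr}\,2\pi i[\opH,\rP]\varphi'(\opH)=2\pi\sigma_I[\opH]$ established in \cite{bal2022topological,QB22}, whose hypotheses are verified by the same commutator/trace-class analysis. Combined with the interface invariants Proposition, this gives $\mathrm{Index}\,T^0=\tfrac{1}{2}(\sign m_- - \sign m_+)$; equivalently, one invokes the bulk--interface correspondence $\mathrm{Index}\,T^0=\mathrm{Index}\,F[\opH^0_B]-\mathrm{Index}\,F[\opH^\infty]$ and applies \cref{prop:BI}. The main technical obstacle is the first step of this case: verifying Schatten-class bounds for $[\rP,U(\opH^0)]$ in the presence of continuous spectrum of $\opH^0$ inside the window of $\varphi'$. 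The cutoff by $\varphi'$, supported in $(-m_0,m_0)$ where only the interface modes contribute (the bulk components on either side are gapped), together with the longitudinal localization coming from $\rP'$, is the crucial mechanism that makes the trace finite, and the detailed verification requires carrying the $|\Im z|^{-1}$ weights from \cref{lem:limit-absorpt-princ-1} through the Helffer--Sj\"ostrand contour integral.
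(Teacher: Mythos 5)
Your proposal matches the paper's own approach: the paper treats this lemma by direct citation of \cite{bal2022topological,QB22} together with a one-line argument for $T^\infty$, and your proposal is essentially a detailed unpacking of what those references do. The argument for $T^\infty$ is exactly the paper's: the common-gap assumption forces $\varphi(\opH^\infty)$ to be a spectral projection, so $U(\opH^\infty)=\exp(2\pi i\varphi(\opH^\infty))=I$, hence $T^\infty=I$ with trivial index. For $T^0$, your Helffer--Sj\"ostrand/commutator outline for Fredholmness, the trace formula ${\rm Index}\,T[\opH]={\rm Tr}\,2\pi i[\opH,\rP]\varphi'(\opH)$, and the bulk--interface correspondence ${\rm Index}\,T^0={\rm Index}\,F[\opH^0_B]-{\rm Index}\,F[\opH^\infty]$ combined with \cref{prop:BI} all reproduce the route taken in the cited references; you also correctly identify that the Schatten-class estimates on $[\rP,U(\opH^0)]$ are the genuinely technical step. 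One small slip: the phrase ``uniformly in $\eps$'' is out of place here, since $\eps$ is fixed at $0$ or $\infty$ in this lemma (uniformity in $\eps$ is instead what is needed for \cref{thm:stab}); and note that Fredholmness only requires compactness of $[\rP,U(\opH^0)]$, while trace-class is what the index-equals-trace identity uses.
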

As mentioned above, the unperturbed operator $\opH^\infty$ admits a spectral gap in $(-m_0,m_0)$ so that $U(\opH^{\infty})=I$ and $T^\infty=I$ is clearly a Fredholm operator with trivial index.  The result for $T^0$ is more interesting and was also obtained in \cite{balContinuousBulkInterface2019}.  


We can now state the main result of this paper:
\begin{theorem}\label{thm:stab}
    For $\varepsilon_1$ sufficiently large and $\varepsilon_0$ sufficiently small, then for all $0<\eps<\eps_0$ and for all $\eps_1<\eps$, $T^\eps$ in \eqref{eq:Teps}  is a Fredholm operator. Moreover, we have
\begin{equation}\label{eq:indexeps}
   {\rm Index}\, T^\varepsilon=0 ,\quad  \varepsilon_1 \leq \varepsilon<\infty, \qquad {\rm Index}\, T^\varepsilon =\frac12(\sign{m_-}-\sign{m_+}), \quad  0<\varepsilon \leq \varepsilon_0.
\end{equation}
\end{theorem}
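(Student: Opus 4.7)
The plan is to establish operator-norm continuity of the family $\eps\mapsto T^\eps$ at the two endpoints $\eps=0$ and $\eps=\infty$, and to transport the Fredholm property and index values from \cref{lem:index} to full neighborhoods of these endpoints via the openness of the set of Fredholm operators and the local constancy of the Fredholm index.

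Set $S(h)\coloneqq U(h)-1$, so that $S\in C_c^\infty(\RR)$ with $\supp S\subset [-m_0,m_0]$. Since $\rP$ is a Heaviside function, $\rP^2=\rP$ as a multiplication operator, and a direct calculation---using \cref{sec:topol-class-1}, which makes $\opH^\infty$ gapped in $(-m_0,m_0)$ and hence $S(\opH^\infty)=0$---yields
\begin{align*}
  T^\eps - T^0 &= \rP\bigl(S(\opH^\eps) - S(\opH^0)\bigr)\rP,\\
  T^\eps - T^\infty &= \rP\, S(\opH^\eps)\,\rP.
\end{align*}
Since $\|\rP\|_{L^2\to L^2}\le 1$, the problem reduces to showing $\|S(\opH^\eps) - S(\opH^\bullet)\|_{L^2\to L^2}\to 0$ for $\bullet\in\{0,\infty\}$.

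Both convergences I would obtain from the Helffer--Sj\"ostrand functional calculus: choose an almost analytic extension $\widetilde{S}\in C_c^\infty(\CC)$ of $S$, supported in $\{|\Re z|\le \lambda,\ |\Im z|\le 1\}$ for some $\lambda > m_0$, and satisfying $|\bar\partial \widetilde{S}(z)|\le C_N |\Im z|^N$ for every $N\ge 0$. Self-adjointness of $\opH^\eps$ then gives
\begin{align*}
  S(\opH^\eps) - S(\opH^\bullet) = \frac{1}{\pi}\int_\CC \bar\partial \widetilde{S}(z)\, \bigl[(\opH^\eps - z)^{-1} - (\opH^\bullet - z)^{-1}\bigr] \,\di L(z).
\end{align*}
Taking $N=2$ and invoking \cref{thm:limit-absorpt-princ-4} in the case $\bullet=0$ yields $\|S(\opH^\eps) - S(\opH^0)\|_{L^2\to L^2}\le C\eps$ for $\eps\in (0,\eps_0')$, since $|\bar\partial\widetilde{S}(z)|\le C|\Im z|^2$ cancels the $|\Im z|^{-2}$ singularity in \eqref{eq:29} over the bounded support of $\widetilde{S}$. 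In the case $\bullet=\infty$, the pointwise bound $\|\opM_{U^\eps}\|_{L^2\to L^2}\le \|\rho\|_{L^\infty}\|W\|_{L^\infty}/\eps$ together with the second resolvent identity yields $\|(\opH^\eps - z)^{-1} - (\opH^\infty - z)^{-1}\|\le C/(\eps|\Im z|^2)$, and the same integration produces $\|S(\opH^\eps)\|_{L^2\to L^2}\le C/\eps$.

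These estimates give $\|T^\eps - T^0\|_{L^2\to L^2}=O(\eps)$ as $\eps\to 0^+$ and $\|T^\eps - T^\infty\|_{L^2\to L^2}=O(1/\eps)$ as $\eps\to\infty$; combined with the openness of the set of Fredholm operators and local constancy of the index, \cref{lem:index} and \eqref{eq:interfaceindex} then furnish $\eps_0,\eps_1$ so that \eqref{eq:indexeps} holds. The principal obstacle is the $|\Im z|^{-2}$ blow-up of the resolvent estimate \eqref{eq:29} near the real axis, which rules out a naive Cauchy/Riesz contour functional calculus; the Helffer--Sj\"ostrand scheme circumvents this precisely because $\bar\partial \widetilde{S}$ vanishes to second order as $\Im z\to 0$. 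The common spectral gap of \cref{sec:topol-class-1} is essential because it supplies a window $(-m_0,m_0)$ containing $\supp S$, outside of which $\widetilde{S}$ can be taken to vanish and $\opH^\infty$ is invertible.
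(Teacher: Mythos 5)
Your proposal is correct and follows essentially the same route as the paper: reduce to $\|U(\opH^\eps)-U(\opH^l)\|\to 0$ via $T^\eps-T^l=\rP\bigl(U(\opH^\eps)-U(\opH^l)\bigr)\rP$, control this difference by the Helffer--Sj\"ostrand formula combined with the norm-resolvent estimate of \cref{thm:limit-absorpt-princ-4} (for $\eps\to 0$) and the second resolvent identity with the $O(1/\eps)$ bound on $\opM_{U^\eps}$ (for $\eps\to\infty$), then conclude by openness of the Fredholm set and local constancy of the index. The observations that $S(\opH^\infty)=0$ and that $\bar\partial\widetilde S$ vanishes to high order along $\RR$ are exactly the points the paper relies on as well.
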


\cref{thm:stab} shows the surprising result that the topological invariants assigned to $\opH^{\varepsilon}$ for $\varepsilon$ sufficient large and $\varepsilon$ sufficient small are different and as a consequence that there is at least one value of $\varepsilon>0$ where the invariant is not defined and $T^\varepsilon$ is {\em not} Fredholm.

\begin{remark}
    The latter result displays the main property we wanted to establish, namely that heterogeneous fluctuations in a half-space are sufficient to generate an asymmetric transport along the interface $x_2\sim0$.

    The terminology of `Anderson' topological insulator is somewhat misleading. As we just showed, periodic fluctuations (and not genuinely random fluctuations as in the derivation of Anderson localization) suffice to generate a topological change. Moreover, these fluctuations generate a non-trivial topology, which may in fact be seen as an obstruction to Anderson localization \cite{topological}.
\end{remark}

\section{Proof of the homogenization and stability results}
\label{sec:main-results}
We now present a detailed proof of the main results stated in the preceding section.
\begin{proof}[Proof of Lemma \ref{lem:main-results-4}]
  \begin{enumerate}[wide] 
  \item We diagonalize the constant coefficient operator $\opH^{\infty}$ by Fourier transform to obtain
\begin{align*}
\widehat{\opH^{\infty}}(\xi) = \xi \cdot \sigma + (m-\beta \abs{\xi}^2) \sigma_3,
\end{align*}
which is the symbol of $\opH^{\infty}$, i.e., $\opH^{\infty} = \ftf^{-1} \widehat{\opH^{\infty}} \ftf,$
see, e.g.,  \cite{balContinuousBulkInterface2019}. Direct computation shows that 
\begin{align}
\label{eq:42}
  \begin{split}
    \widehat{\opH^{\infty}}(\xi)
    &= \xi_1
      \sigma_1
      + \xi_2
      \sigma_2
      + (m - \beta \abs{\xi}^2)
      \sigma_3
      = \begin{pmatrix}
          m - \beta \abs{\xi}^2 & \xi_1 - i \xi_2\\
          \xi_1 + i \xi_2 &-(m - \beta \abs{\xi}^2)
      \end{pmatrix}.
  \end{split}
\end{align}
Therefore, 
\begin{align}
\label{eq:43}
 \widehat{\opH^{\infty}}^{*} = \widehat{\opH^{\infty}}
\end{align}
and 
\begin{align}
\label{eq:44}
  \begin{split}
    \widehat{\opH^{\infty}}^2
    &= \left(  \abs{\xi}^2 + \left(m - \beta \abs{\xi}^2\right)^2
      \right) \sigma_0.
  \end{split}
\end{align}

\item\label{item:2} Clearly, $H^2(\RR^2;\CC^2) \subset \dom(\opH^{\infty}).$ So we only need
  to justify  $\dom (\opH^{\infty}) \subset H^2(\RR^2;\CC^2)$.
Let $f \in \dom (\opH^{\infty})$, then $\opH^{\infty} f \in L^2(\RR^2;\CC^2)$. We need $f \in H^2(\RR^2; \CC^2)$. To this end, we will show
$\left\langle \xi \right\rangle^2 \hat{f} \in L^2(\RR^2;\CC^2).$

By the Plancherel theorem,
$\norm{\widehat{\opH^{\infty}} \hat{f}} = \norm{\opH^{\infty} f}$, so
$\widehat{\opH^{\infty}} \hat{f}$ is in $L^2(\RR^2;\CC^2)$. Together
with \eqref{eq:43} and \eqref{eq:44}, we get
\begin{align*}
 \infty >  \norm{\widehat{\opH^{\infty}}\hat{f}}^2
  &= \left\langle \widehat{\opH^{\infty}} \hat{f}, \widehat{\opH^{\infty}} \hat{f}\right\rangle
    = \left\langle \widehat{\opH^{\infty}}^2 \hat{f},\hat{f} \right\rangle\\
  &=  \int_{\RR^2} \left(  \abs{\xi}^2 + \left(m - \beta \abs{\xi}^2\right)^2
    \right) \abs{\hat{f}(\xi)}^2 \di \xi\\
  &\ge \int_{\RR^2} \abs{\xi}^2 \abs{\hat{f}(\xi)}^2 \di \xi,
\end{align*}
so
\begin{align*}
  \infty >  (2 + 2 \abs{m\beta})\norm{\widehat{\opH^{\infty}}\hat{f}}^2
  &= \left\langle \widehat{\opH^{\infty}}^2 \hat{f},\hat{f}
    \right\rangle + (1 + 2
    \abs{m\beta})\norm{\widehat{\opH^{\infty}}\hat{f}}^2 \\
  &=  \int_{\RR^2} \left(  \abs{\xi}^2 + \left(m - \beta \abs{\xi}^2\right)^2
    \right) \abs{\hat{f}(\xi)}^2 \di \xi + (1 + 2
    \abs{m\beta})\norm{\widehat{\opH^{\infty}}\hat{f}}^2\\
  &\ge \int_{\RR^2} \left( \abs{\xi}^2 + m^2 - 2m\beta \abs{\xi}^2 +
    \beta^2 \abs{\xi}^4 \right) \abs{\hat{f}(\xi)}^2\di \xi\\
  &\qquad+
    \int_{\RR^2} (1+2 \abs{m\beta}) \abs{\xi}^2
    \abs{\hat{f}(\xi)}^2 \di \xi\\
  &\ge \int_{\RR^2} \left( 2\abs{\xi}^2 + m^2  +
    \beta^2 \abs{\xi}^4 \right) \abs{\hat{f}(\xi)}^2\di \xi\\
  &\ge \min \left\{ 1, m^2, \beta^2 \right\} \int_{\RR^2} \left( 1 +
    \abs{\xi}^2 \right)^2 \abs{\hat{f}(\xi)}^2 \di \xi.
\end{align*}
Therefore, $\left\langle \xi \right\rangle^2 \hat{f} \in L^2(\RR^2;\CC^2)$. 
\item 
  Note that $C_c^{\infty}(\RR^2;\CC^2) \subset \dom (\opH^{\infty})$
  and $C_c^{\infty}(\RR^2;\CC^2)$ is dense in $L^2(\RR^2;\CC^2)$, the
  operator $\opH^{\infty}$ is densely defined. From \eqref{eq:43},
  $\opH^{\infty}$ is also symmetric. Thus, to show $\opH^{\infty}$ is
  self-adjoint, we only need to show
  $(\opH^{\infty} \pm i) \dom(\opH^{\infty}) = L^2(\RR^2;\CC^2)$. The
  inclusion
  $(\opH^{\infty} \pm i) \dom(\opH^{\infty}) \subset L^2(\RR^2;\CC^2)$
  is obvious. We now prove $(\opH^{\infty} + i)
  \dom(\opH^{\infty}) = (\opH^{\infty} + i)H^2(\RR^2;\CC^2) \supset
  L^2(\RR^2;\CC^2)$.

  Let $g \in L^2(\RR^2;\CC^2)$ and $f = \ftf^{-1}\left(
    (\widehat{\opH^{\infty}} + i)^{-1} \hat{g} \right)$. The latter is
  well defined because $\pm i $ are not eigenvalues of
  $\widehat{\opH^{\infty}}(\xi)$ for any $\xi \in \RR^2$. Observe that 
  \begin{align}
    \label{eq:64}
    \begin{split}
      \hat{f}
  &=  \left( \widehat{\opH^{\infty}} + i
    \right)^{-1} \hat{g}\\
  &= \left( \widehat{\opH^{\infty}} - i
    \right)\left( \widehat{\opH^{\infty}} - i
    \right)^{-1}\left( \widehat{\opH^{\infty}} + i
    \right)^{-1} \hat{g}\\
  &= \left( \widehat{\opH^{\infty}} -i \right) \left(
    (m- \beta \abs{\xi}^2)^2 + \abs{\xi}^2 + 1 \right)^{-1} \hat{g},
    \end{split}
\end{align}
Let $\omega (\xi) \coloneqq \frac{\abs{\left( 1 + \abs{\xi}^2\right) \left( \widehat{\opH^{\infty}} - i
  \right)}}{\left(m- \beta \abs{\xi}^2\right)^2 + \abs{\xi}^2 + 1}$
then $\omega \ge 0$ is continuous and there exist $\omega_0,
\omega_{\infty} \ge 0$ such that $\lim_{\abs{\xi}\to 0} \omega = \omega_0$ and
$\lim_{\abs{\xi} \to \infty} \omega = \omega_{\infty}. $ Thus there
exist $c_0, c_{\infty} > 0$ such that $\omega(\xi) \le \omega_0 +
\omega_{\infty} + 1$ whenever $\abs{\xi} \in [0,c_0] \cup [c_{\infty},
\infty)$. Continuity implies $\max_{\abs{\xi} \in [c_0, c_{\infty}]} \omega(\xi)$
exists. Thus \eqref{eq:64} implies
\begin{align*}
  \abs{\left( 1 + \abs{\xi}^2 \right)\hat{f}}
  \le \left( \max_{\abs{\xi}\in [c_0,
  c_{\infty}]} \omega(\xi) + \omega_0 + \omega_{\infty}
+ 1 \right)\abs{\hat{g}} \in L^2(\RR^2;\CC^2),
\end{align*}
so $\left( 1 + \abs{\xi}^2 \right)\hat{f} \in L^2(\RR^2;\CC^2),$ or $f
\in H^2(\RR^2;\CC^2).$ 
  Using
  $\opH^{\infty} = \ftf^{-1} \widehat{\opH^{\infty}} \ftf$ and the
  definition of $f$, we have $g = \left( \opH^{\infty} + i
  \right)f$. Therefore $L^2(\RR^2;\CC^2) \subset \left( \opH^{\infty}
    + i \right) H^2(\RR^2;\CC^2)$. The proof for $L^2(\RR^2;\CC^2) \subset \left( \opH^{\infty}
    - i \right) H^2(\RR^2;\CC^2)$ is similar.
\end{enumerate}
\end{proof}

\begin{proof}[Proof of \cref{thm:main-results-2}]
\begin{enumerate}[wide]
  \item Let $z \in \CC$ with $\Im z \ne 0$. From \eqref{eq:12}, $z$ belongs to the resolvent set of
    $\opH^{\varepsilon}$. Therefore, \eqref{eq:7} has a unique
    solution $\psi^{\varepsilon,z} \in \dom (\opH^{\infty}) = H^2(\RR^2;\CC^2)$. Moreover,
    applying the resolvent estimate for the self-adjoint operator
    $\opH^{\varepsilon}$ \cite[Theorem
    1.2.10]{daviesSpectralTheoryDifferential1995}, we obtain 
\begin{align}
  \label{eq:45}
  \begin{split}
    \norm{\psi^{\varepsilon,z}}
  &
    \le \norm{(\opH^{0}- z )^{-1}} \norm{f}
    \le \abs{\Im z}^{-1} \norm{f}.
  \end{split}
\end{align}
\item 
 For $\eta \in \calD(\RR^2;\CC^2),$ we have from \eqref{eq:2} and
 integration by parts that
 \begin{align}
   \label{eq:63}
   \nonumber
   \left\langle U^{\varepsilon}\psi^{\varepsilon,z}, \sigma^3 \eta \right\rangle
&=\left\langle \frac{1}{\varepsilon} \rho (x) W \left(
  \frac{x}{\varepsilon} \right) \psi^{\varepsilon,z}, \sigma_3 \eta
  \right\rangle\\ \nonumber
  &= \left\langle \left[ \left( \Div (\rho(x)
    \Phi_{kl}^{\varepsilon}(x))- \nabla \rho(x) \cdot
    \Phi_{kl}^{\varepsilon}(x) \right)e_k \otimes e_l \right]
    \psi^{\varepsilon,z}_i e_i, \sigma_3 \eta \right\rangle\\ \nonumber
  &=  \left\langle  \left( \Div (\rho(x)
    \Phi_{kl}^{\varepsilon}(x))- \nabla \rho(x) \cdot
    \Phi_{kl}^{\varepsilon}(x) \right) 
    \psi^{\varepsilon,z}_l e_k, \overline{\eta_1} e_1 -
    \overline{\eta_2} e_2 \right\rangle\\ \nonumber
  &=  \sum_{k,l=1}^2 (-1)^{k-1} \int_{\RR^2} \left( \Div (\rho(x)
    \Phi_{kl}^{\varepsilon}(x))- \nabla \rho(x) \cdot
    \Phi_{kl}^{\varepsilon}(x) \right) 
    \psi^{\varepsilon,z}_l \overline{\eta_k} \di x\\ \nonumber
  &=  \sum_{k,l=1}^2 (-1)^k \left\{ \int_{\RR^2}  \rho(x)
    \Phi_{kl}^{\varepsilon}(x)
    \nabla \left( \psi^{\varepsilon,z}_l \overline{\eta_k}  \right)\di x
    + \int_{\RR^2} \nabla \rho(x) \cdot
    \Phi_{kl}^{\varepsilon}(x)  
    \psi^{\varepsilon,z}_l \overline{\eta_k}\di x
     \right\}\\ \nonumber
&=\sum_{k,l=1}^2 (-1)^k \left\{ \int_{\RR^2}  \rho(x)
    \Phi_{kl}^{\varepsilon}(x)
    \left( \overline{\eta_k} \nabla \psi^{\varepsilon,z}_l  +
  \psi^{\varepsilon,z} \nabla  \overline{\eta_k}\right)\di x
    \right.\\ 
  &\qquad \left.
    + \int_{\RR^2} \nabla \rho(x) \cdot
    \Phi_{kl}^{\varepsilon}(x)  
    \psi^{\varepsilon,z}_l \overline{\eta_k}\di x
    \right\}.
\end{align}

Thus, multiplying \eqref{eq:7} by
$\sigma_3 \eta$,  and integrating by parts, we obtain
\begin{align}
\label{eq:27}
  \begin{split}
    &\left\langle \psi^{\varepsilon,z}, (D \cdot \sigma + m \sigma_3
  -z ) \sigma_3\eta\right\rangle
-\left\langle \beta
      \nabla \sigma_3 \psi^{\varepsilon,z}, \nabla \sigma_3\eta
  \right\rangle \\
  &+ \sum_{k,l=1}^2 (-1)^k \left\{ \int_{\RR^2}  \rho(x)
    \Phi_{kl}^{\varepsilon}(x)
    \left( \overline{\eta_k} \nabla \psi^{\varepsilon,z}_l  +
  \psi^{\varepsilon,z}_l \nabla  \overline{\eta_k}\right)\di x
    \right.\\
  &\qquad \left.
    + \int_{\RR^2} \nabla \rho(x) \cdot
    \Phi_{kl}^{\varepsilon}(x)  
    \psi^{\varepsilon,z}_l \overline{\eta_k}\di x
     \right\}\\
    &=\left\langle f,\sigma_3\eta \right\rangle.
  \end{split}
\end{align}
Therefore, 
\begin{align}
\label{eq:46}
  \nonumber
  \abs{\left\langle \beta
  \nabla \psi^{\varepsilon,z}, \nabla \eta
  \right\rangle}
  &\le \abs{\left\langle f,\sigma_3 \eta \right\rangle} + \abs{\left\langle \psi^{\varepsilon,z}, (D \cdot \sigma + m \sigma_3
    -z ) \sigma_3\eta\right\rangle
    }\\\nonumber
  &\qquad +  \left\{ \abs{\int_{\RR^2}  \rho(x)
    \Phi_{kl}^{\varepsilon}(x)
    \left( \overline{\eta_k} \nabla \psi^{\varepsilon,z}_l  +
    \psi^{\varepsilon,z}_l \nabla  \overline{\eta_k}\right)\di x}
    \right.\\\nonumber
  &\qquad \qquad\left.
    + \abs{\int_{\RR^2} \nabla \rho(x) \cdot
    \Phi_{kl}^{\varepsilon}(x)  
    \psi^{\varepsilon,z}_l \overline{\eta_k}\di x}
    \right\}\\\nonumber
  &\le \norm{f} \norm{\eta} + (2 + \abs{m} + \abs{z}) \norm{\psi^{\varepsilon,z}}
    \norm{\eta}_{H^1} \\\nonumber
  &\qquad + C\sum_{k,l=1}^2 \norm{\rho}_{L^{\infty}} 
    \norm{\Phi_{kl}^{\varepsilon}}_{L^{\infty}} \left( \norm{\eta}
    \norm{\nabla \psi^{\varepsilon,z}} + \norm{\nabla \eta}
    \norm{\psi^{\varepsilon,z}} \right) \\\nonumber
  &\qquad + C \sum_{k,l=1}^2\norm{\nabla
    \rho}_{L^{\infty}} \norm{\Phi_{kl}^{\varepsilon}}_{L^{\infty}}
    \norm{\psi^{\varepsilon,z}} \norm{\eta}\\\nonumber
  &\le \norm{f} \norm{\eta} + (2 + \abs{m} + \abs{z}) \norm{\psi^{\varepsilon,z}}
    \norm{\eta}_{H^1} \\
  &\qquad +C \norm{\rho}_{L^{\infty}}
    \norm{W}_{C^{0,\alpha}} \left( \norm{\eta}
    \norm{\nabla \psi^{\varepsilon,z}} + \norm{\nabla \eta}
    \norm{\psi^{\varepsilon,z}} \right) \\\nonumber
  &\qquad + C\norm{\nabla
    \rho}_{L^{\infty}} \norm{v}_{C^{0,\alpha}}
    \norm{\psi^{\varepsilon,z}} \norm{\eta},
\end{align}
where in the last step we used the Schauder estimate
\cite{gilbargEllipticPartialDifferential2001} for \eqref{eq:6} to
obtain $\norm{\Phi_{kl}^{\varepsilon}}_{L^{\infty}} =
\norm{\Phi_{kl}}_{L^{\infty}} \le C \norm{W_{kl}}_{C^{0,\alpha}} \le C
\norm{W}_{C^{0,\alpha}}.$
 Moreover, by density, we can take $\eta = \psi^{\varepsilon,z}$.
Thus
\eqref{eq:46} becomes 
\begin{align*}
  \begin{split}
    \abs{\beta} \norm{\nabla \psi^{\varepsilon,z}}^2
    &\le  \norm{f} \norm{\psi^{\varepsilon,z}} + (2 + \abs{m} + \abs{z}) \norm{\psi^{\varepsilon,z}}
      \norm{\psi^{\varepsilon,z}}_{H^1} \\
    &\qquad +C \norm{\rho}_{L^{\infty}}
      \norm{W}_{C^{0,\alpha}}  \norm{\nabla \psi^{\varepsilon,z}}
      \norm{\psi^{\varepsilon,z}}  \\
    &\qquad + C\norm{\nabla
      \rho}_{L^{\infty}} \norm{W}_{C^{0,\alpha}}
      \norm{\psi^{\varepsilon,z}} \norm{\psi^{\varepsilon,z}},\\
    &\le C \left( \abs{\Im z}^{-1} \norm{f}^2 + (2+\abs{m}+ \abs{z})\abs{\Im
      z}^{-1} \norm{f} (\abs{\Im z}^{-1} \norm{f} + \norm{\nabla
      \psi^{\varepsilon,z}})  \right.\\
    &\qquad \left. +
      \norm{\rho}_{L^{\infty}}\norm{W}_{C^{0,\alpha}} \norm{f} \norm{\nabla
      \psi^{\varepsilon,z}} + \norm{\rho}_{L^{\infty}}
      \norm{W}_{C^{0,\alpha}} \norm{f}^2
      \right)\\
    &\le C \left(  \abs{\Im z}^{-1} \norm{f}^2 + (2+\abs{m}+ \abs{z})\abs{\Im
      z}^{-2} \norm{f}^2 + \norm{\rho}_{L^{\infty}} \norm{W}_{C^{0,\alpha}}
      \norm{f}^2\right)\\
    &\qquad + C \left[ (2+\abs{m}+\abs{z})\abs{\Im z}^{-1} +
      \norm{\rho}_{L^{\infty}} \norm{W}_{C^{0,\alpha}} \right] \norm{f} \norm{\nabla \psi^{\varepsilon,z}}.
  \end{split}
\end{align*}
By Young's inequality, 
\begin{align*}
  &C \left[ (2+\abs{m}+\abs{z})\abs{\Im z}^{-1} +
      \norm{\rho}_{L^{\infty}} \norm{W}_{C^{0,\alpha}} \right] \norm{f}
    \norm{\nabla \psi^{\varepsilon,z}}\\
  &\quad\le
    \frac{\abs{\beta}}{2} \norm{\psi^{\varepsilon,z}}^2 + \frac{2}{\abs{\beta}}{C^2 \left[ (2+\abs{m}+\abs{z})\abs{\Im z}^{-1} +
      \norm{\rho}_{L^{\infty}} \norm{W}_{C^{0,\alpha}} \right]^2} \norm{f}^2.
\end{align*}
Thus, 
\begin{align*}
  \frac{\abs{\beta}}{2} \norm{\nabla \psi^{\varepsilon,z}}^2
  &\le C \left( \abs{\Im z}^{-1} + (2 + \abs{m} + \abs{z})\abs{\Im z}^{-2} +
    \frac{1}{\abs{\beta}}(2 + \abs{m} + \abs{z})\abs{\Im z}^{-1} \right.\\
  &\qquad \left. +\frac{1}{\abs{\beta}} \norm{\rho}_{L^{\infty}}
    \norm{W}_{C^{0,\alpha}} + \frac{1}{\abs{\beta}} \norm{\rho}_{L^{\infty}}^2
    \norm{W}_{C^{0,\alpha}}^2\right) \norm{f}^2\ \le C \abs{\Im z}^{-2} \norm{f}^2,
\end{align*}
whenever $\Re z \in [-\lambda,\lambda]$ and
$\Im z \in [-\gamma, \gamma] \setminus \left\{ 0 \right\}$ with
$\gamma = \gamma (\lambda, m, \beta, \norm{\rho}_{L^{\infty}},
\norm{W}_{C^{0,\alpha}}) > 0$ small enough. Therefore,
\begin{align}
\label{eq:47}
  \norm{\nabla \psi^{\varepsilon,z}}
  \le C \abs{\Im z}^{-1}\norm{f}.
\end{align}
From \eqref{eq:45} and \eqref{eq:47}, we obtain \eqref{eq:9}.

  \item From now on, we suppress the dependence on $z $ to lighten
    the notation. By \eqref{eq:9} and \cite{allaireHomogenizationTwoscaleConvergence1992,nguetsengGeneralConvergenceResult1989,visintinTwoscaleCalculus2006}, there exist $\psi^s_1, \psi^s_2$ in
$H^1(\RR^2;\CC)$ and $\psi^f_1, \psi^f_2$ in $L^2(\RR^2;
H^1_{\per}(Y;\CC^2)/\CC)$ such that (up to a subsequence), we have the two-scale convergence
\begin{align}
\label{eq:11}
  \begin{split}
    \psi^{\varepsilon}
    =
    \begin{pmatrix}
      \psi^{\varepsilon}_1\\
      \psi^{\varepsilon}_2
    \end{pmatrix}
    &\wcv[2]
    \begin{pmatrix}
      \psi^s_1\\
      \psi^s_2
    \end{pmatrix} \quad \text{ and } \quad
    \nabla \psi^{\varepsilon}
    =
      \begin{pmatrix}
        \nabla \psi^{\varepsilon}_1\\
        \nabla \psi^{\varepsilon}_2
      \end{pmatrix}
    \wcv[2]
      \begin{pmatrix}
        \nabla \psi^s_1 + \nabla_y \psi^f_1\\
        \nabla \psi^s_2 + \nabla_y \psi^f_2
      \end{pmatrix}.
  \end{split}
\end{align}

\item Suppose $\eta = \eta^s + \varepsilon \eta^f$ with $\eta^s =
(\eta^s_1, \eta^s_2)^{\top} \in \calD(\RR^2; \CC^2)$ and $\eta^f =
(\eta^f_1, \eta^f_2)^{\top} \in \calD (\RR^2;
C_{\per}^{\infty}(Y;\CC^2))$.  Then \eqref{eq:7} implies
\begin{align}
\label{eq:8}
\left\langle (\opH^{\varepsilon}-z) \psi^{\varepsilon}, \eta
  \right\rangle
  = \left\langle f,\eta \right\rangle,
\end{align}
or equivalently,
\begin{align}
\label{eq:10}
J_1 + J_2 = \left\langle f,\eta \right\rangle,
\end{align}
where 
\begin{align*}
  J_1
  &\coloneqq \left\langle (D \cdot \sigma + (m+\beta \Delta) \sigma_3
    -z ) \psi^{\varepsilon}, \eta\right\rangle,\\
  J_2
  &\coloneqq \left\langle U^{\varepsilon} \psi^{\varepsilon}, \eta \right\rangle.
\end{align*}
\item We now compute $\lim_{\varepsilon \to 0} J_1$. Using integration by parts, \eqref{eq:11} leads to 
\begin{align*}
  \begin{split}
    J_1
    &= \left\langle (D \cdot \sigma + (m+\beta \Delta) \sigma_3
      -z ) \psi^{\varepsilon}, \eta\right\rangle\\
    &= \left\langle (D \cdot \sigma + m \sigma_3
    -z ) \psi^{\varepsilon}, \eta\right\rangle +
      \left\langle \beta \Delta \sigma_3 \psi^{\varepsilon}, \eta
      \right\rangle\\
    &= \left\langle \left[ -i \partial_{x_1} \sigma_1 - i
      \partial_{x_2} \sigma_2 + m \sigma_3 - z  \right]
      \psi^{\varepsilon}, \eta \right\rangle -\left\langle \beta
      \nabla \sigma_3 \psi^{\varepsilon}, \nabla \eta \right\rangle\\
    &\cv[\varepsilon \to 0]
    \left\langle \left( D \cdot \sigma \right) \psi^s + \left( D_y
      \cdot \sigma \right) \psi^f + m \sigma_3 \psi^s - z \psi^s,
      \eta^s \right\rangle \\
    &\qquad \qquad - \left\langle \beta \left[ (\nabla \sigma_3) \psi^s
      +(\nabla_y \sigma_3) \psi^f \right], \nabla \eta^s + \nabla_y \eta^f \right\rangle.
  \end{split}
\end{align*}
\item To compute $\lim_{\varepsilon \to 0} J_2,$ we write 
\begin{align*}
  J_2
  &= \left\langle U^{\varepsilon} \psi^{\varepsilon}, \eta
    \right\rangle\\
  &=\left\langle \frac{1}{\varepsilon} \rho (x) W \left(
    \frac{x}{\varepsilon} \right) \psi^{\varepsilon}, \eta
    \right\rangle\\
  &= \left\langle \left[ \left( \Div (\rho(x)
    \Phi_{kl}^{\varepsilon}(x))- \nabla \rho(x) \cdot
    \Phi_{kl}^{\varepsilon}(x) \right)e_k \otimes e_l \right]
    \psi^{\varepsilon}_i e_i, \eta \right\rangle\\
  &=  \left\langle  \left( \Div (\rho(x)
    \Phi_{kl}^{\varepsilon}(x))- \nabla \rho(x) \cdot
    \Phi_{kl}^{\varepsilon}(x) \right) 
    \psi^{\varepsilon}_l e_k, \overline{\eta_j} e_j\right\rangle\\
  &=  \int_{\RR^2} \left( \Div (\rho(x)
    \Phi_{kl}^{\varepsilon}(x))- \nabla \rho(x) \cdot
    \Phi_{kl}^{\varepsilon}(x) \right) 
    \psi^{\varepsilon}_l \overline{\eta_k} \di x\\
  &=  -  \int_{\RR^2}  \rho(x)
    \Phi_{kl}^{\varepsilon}(x)
    \nabla \left( \psi^{\varepsilon}_l \overline{\eta_k}  \right)\di x
    - \int_{\RR^2} \nabla \rho(x) \cdot
    \Phi_{kl}^{\varepsilon}(x)  
    \psi^{\varepsilon}_l \overline{\eta_k}\di x
     \\
&=- \int_{\RR^2}  \rho(x)
    \Phi_{kl}^{\varepsilon}(x)
    \left( \overline{\eta_k} \nabla \psi^{\varepsilon}_l  +
  \psi^{\varepsilon}_l \nabla  \overline{\eta_k}\right)\di x
    - \int_{\RR^2} \nabla \rho(x) \cdot
    \Phi_{kl}^{\varepsilon}(x)  
    \psi^{\varepsilon}_l \overline{\eta_k}\di x
    \\
  &=- \int_{\RR^2}  \rho(x)
    \Phi_{kl} \left( \frac{x}{\varepsilon} \right)
    \left( \overline{\eta_k^s + \varepsilon \eta_k^f} \nabla \psi^{\varepsilon}_l  +
  \psi^{\varepsilon}_l \nabla  \overline{\eta_k^s + \varepsilon \eta_k^f}\right)\di x
   \\
  &\qquad
    - \int_{\RR^2} \nabla \rho(x) \cdot
    \Phi_{kl}\left( \frac{x}{\varepsilon} \right)  
    \psi^{\varepsilon}_l \overline{\eta_k^s + \varepsilon \eta_k^f}\di x
  \\
  &\cv[\varepsilon \to 0]
    - \int_{\RR^2 \times Y}  \rho(x)
    \Phi_{kl} \left( y \right)
    \left( \overline{\eta_k^s} \left( \nabla \psi_l^s + \nabla_y \psi_l^f \right)  +
  \psi_l^s \left( \nabla \overline{\eta_k^s} + \nabla_y
    \overline{\eta_k^f} \right)\right)\di x \di y
   \\
  &\qquad
    - \int_{\RR^2\times Y} \nabla \rho(x) \cdot
    \Phi_{kl}\left( y \right)  
    \psi_l^s \overline{\eta_k^s}\di x \di y,
\end{align*}
where we used \eqref{eq:11} in the last convergence.
\item Therefore, passing to the limit $\varepsilon \to 0$ in \eqref{eq:10}, we
  obtain 
\begin{align}
\label{eq:13}
  \begin{split}
    &\left\langle \left( D \cdot \sigma \right) \psi^s + \left( D_y
      \cdot \sigma \right) \psi^f + m \sigma_3 \psi^s - z \psi^s,
      \eta^s \right\rangle \\
    &\qquad \qquad - \left\langle \beta \left[ (\nabla \sigma_3) \psi^s
      +(\nabla_y \sigma_3) \psi^f \right], \nabla \eta^s + \nabla_y
      \eta^f \right\rangle\\
    &\qquad \qquad - \int_{\RR^2 \times Y}  \rho(x)
    \Phi_{kl} \left( y \right)
    \left( \overline{\eta_k^s} \left( \nabla \psi_l^s + \nabla_y \psi_l^f \right)  +
  \psi_l^s \left( \nabla \overline{\eta_k^s} + \nabla_y
      \overline{\eta_k^f} \right)\right)\di x \di y
   \\
  &\qquad \qquad \qquad
    - \int_{\RR^2 \times Y} \nabla \rho(x) \cdot
    \Phi_{kl}\left( y \right)  
    \psi_l^s \overline{\eta_k^s}\di x \di y\ \ =\  \left\langle f, \eta^s \right\rangle.
  \end{split}
\end{align}
\item We derive the cell problems. In \eqref{eq:13}, let $\eta^s = 0$,
  then 
  \begin{align*}
    &-\left\langle \beta \left[ (\nabla \sigma_3) \psi^s
      +(\nabla_y \sigma_3) \psi^f \right],  \nabla_y
      \eta^f \right\rangle
      - \int_{\RR^2 \times Y} \rho(x) \Phi_{kl}(y) \psi_l^s
      \nabla_y \overline{\eta_k^f}  \di x
      \di y = 0,
  \end{align*}
  or equivalently, 
\begin{align}
  \label{eq:14}
  \left\langle \beta \left[ (\nabla \sigma_3) \psi^s
      +(\nabla_y \sigma_3) \psi^f \right]
      + \rho (x) \left\{ \Phi_{kl}(y) e_k \otimes e_l  \right\}\psi^s_j e_j , \nabla_y \eta^f
      \right\rangle = 0.
\end{align}
Substuting the ansatz 
\begin{align}
\label{eq:15}
  \psi^f(x,y)
  \coloneqq \rho(x)\sigma_3 \left\{ T_{kl}(y) e_k \otimes e_l  \right\} \psi^s_j
  e_j
  = \rho(x) \sigma_3
  \begin{pmatrix}
    T_{11}(y) & T_{12}(y)\\
    T_{21}(y) & T_{22} (y)
  \end{pmatrix} 
  \psi^s(x),
\end{align}
with $T_{kl} \in H^1_{\per}(Y; \CC^2), ~1 \le k,l \le 2,
$ into \eqref{eq:14}, and noticing that $\left\langle \beta (\nabla
  \sigma_3) \psi^s, \nabla_y \eta^f \right\rangle = 0$ by integration
by parts and periodicity, we obtain
\begin{align*}
\left\langle \rho(x) \left\{ \left(\beta \nabla_yT_{kl}(y) + \Phi_{kl}(y)  \right)
  e_k \otimes e_l \right\} \psi_j^s e_j, \nabla_y \eta^f
  \right\rangle = 0,
\end{align*}
holds for any $\eta^f \in \calD(\RR^2;
C^{\infty}_{\per}(Y;\CC^2))$. In the above equation, choose
$\eta^f (x,y) = \rho_0(x) \theta (y)$ where
$\rho_0 \in \calD(\RR^2;\RR)$ and $\theta \in C_{\per}^{\infty}(Y;\CC^2)$. Then, Fubini's theorem implies
\begin{align*}
\int_{\RR^2} \rho(x) \rho_0(x) \psi_l^s (x) \di x \int_Y \left(
 \beta \nabla_yT_{kl}(y) + \Phi_{kl}(y) \right) \nabla_y
  \overline{\theta_k} \di y  = 0.
\end{align*}
Recall from \eqref{eq:5} that $\Div_y \Phi_{kl} = W_{kl}$. We choose
\begin{align}
\label{eq:31}
T_{kl} \in H^1_{\per}(Y; \CC), \quad\beta \Delta_{yy} T_{kl} + W_{kl}
  = 0, \qquad 1 \le k,l\le 2.
\end{align}
Clearly, \eqref{eq:31} is well-posed. 

\item We now derive the effective problem. In \eqref{eq:13}, let $\eta^f =
  0$. Then,
\begin{align}
\label{eq:18}
\begin{split}
    &\left\langle \left( D \cdot \sigma \right) \psi^s + \left( D_y
      \cdot \sigma \right) \psi^f + m \sigma_3 \psi^s - z \psi^s,
      \eta^s \right\rangle  - \left\langle \beta \left[ (\nabla \sigma_3) \psi^s
      +(\nabla_y \sigma_3) \psi^f \right], \nabla \eta^s\right\rangle\\
    &\qquad \qquad - \int_{\RR^2 \times Y}  \rho(x)
    \Phi_{kl} \left( y \right)
    \left( \overline{\eta_k^s} \left( \nabla \psi_l^s + \nabla_y \psi_l^f \right)  +
  \psi_l^s  \nabla \overline{\eta_k^s} \right)\di x \di y
   \\
  &\qquad \qquad \qquad
    - \int_{\RR^2 \times Y} \nabla \rho(x) \cdot
    \Phi_{kl}\left( y \right)  
    \psi_l^s \overline{\eta_k^s}\di x \di y\ \ = \left\langle f, \eta^s \right\rangle.
  \end{split}
\end{align}
Note that
$\left\langle (D_y \cdot \sigma) \psi^f, \eta^s \right\rangle = 0$ by
\eqref{eq:15} and periodicity. Moreover, recall that
$\Phi_{kl}(y) = \nabla_y \phi_{kl}(y)$ for some $\phi_{kl} \in H^2_{\per}(Y)$, so
$\int_Y \Phi_{kl}(y) \di y =0$. Thus \eqref{eq:18} becomes
\begin{align}
\label{eq:19}
\begin{split}
    &\left\langle \left( D \cdot \sigma \right) \psi^s  + m \sigma_3 \psi^s - z \psi^s,
      \eta^s \right\rangle  - \left\langle \beta \left[ (\nabla \sigma_3) \psi^s
      +(\nabla_y \sigma_3) \psi^f \right], \nabla \eta^s\right\rangle\\
    &\qquad \qquad - \int_{\RR^2 \times Y}  \rho(x)
    \Phi_{kl} \left( y \right)
     \overline{\eta_k^s}  \nabla_y \psi_l^f\di x \di y\ \ = \left\langle f, \eta^s \right\rangle.
  \end{split}
\end{align}
We obtain by \eqref{eq:15}, integration by parts and periodicity that
\begin{align}
  \label{eq:20}
  \begin{split}
    &\left\langle \beta \left[ (\nabla \sigma_3) \psi^s
      +(\nabla_y \sigma_3) \psi^f \right], \nabla \eta^s\right\rangle\\
    &=\left\langle \beta \left[ (\nabla \sigma_3) \psi^s
      +(\nabla_y \sigma_3) \rho(x) \sigma_3
  T (y)
  \psi^s(x) \right], \nabla \eta^s\right\rangle\ \ = \left\langle \beta \left( \nabla \sigma_3 \right) \psi^s,
      \nabla \eta^s \right\rangle.
  \end{split}
\end{align}
Moreover, 
\begin{align}
\label{eq:21}
  \nonumber
    &- \int_{\RR^2 \times Y}  \rho(x)
      \Phi_{kl} \left( y \right)
      \overline{\eta_k^s(x)}  \nabla_y \psi_l^f(x,y)\di x \di y\\
  \nonumber
    &=- \int_{\RR^2}  \rho(x)  \overline{\eta_k^s(x)}  \int_Y
      \Phi_{kl} \left( y \right)
      \nabla_y \psi_l^f(x,y)\di y \di x\\ \nonumber
    &= \int_{\RR^2} \rho(x)  \overline{\eta_k^s(x)} \int_Y
      \psi_l^f(x,y) \Div \Phi_{kl}(y) \di y \di x\\\nonumber
    &= \int_{\RR^2} \rho(x)  \overline{\eta_k^s(x)} \int_Y
      \psi_l^f(x,y) W_{kl}(y) \di y \di x\\\nonumber
    &= \int_{\RR^2} \rho(x)  \overline{\eta_k^s(x)} \int_Y
      \left( \psi^f(x,y) e_l \right) W_{kl}(y) \di y \di x\\\nonumber
    &= \int_{\RR^2} \rho(x)  \overline{\eta_k^s(x)} \int_Y
      \left(  \left( \rho(x)\sigma_3 \left\{ T_{nm}(y) e_n \otimes e_m  \right\} \psi^s_j(x)
      e_j \right) e_l \right) W_{kl}(y) \di y \di x\\\nonumber
    &= \int_{\RR^2} \rho^2(x)  \overline{\eta_k^s(x)} \int_Y
      \left(  \left( \sigma_3 \left\{ T_{nm}(y) \psi^s_m(x) e_n   \right\} 
      \right) e_l \right) W_{kl}(y) \di y \di x\\\nonumber
    &= \int_{\RR^2} \rho^2(x)  \overline{\eta_k^s(x)} \int_Y
      \left(  \left( \sum_{n = 1}^2 (-1)^{n-1}  T_{nm}(y) \psi^s_m(x) e_n   
      \right) e_l \right) W_{kl}(y) \di y \di x\\\nonumber
    &= \int_{\RR^2} \rho^2(x)  \overline{\eta_k^s(x)} \int_Y
         \sum_{n = 1}^2 (-1)^{n-1}  T_{nm}(y) \psi^s_m(x)   
      W_{kn}(y) \di y \di x\\\nonumber
    &= \int_{\RR^2} \rho^2(x)  \overline{\eta_k^s(x)} \psi^s_m(x)   \int_Y
         \sum_{n = 1}^2 (-1)^{n-1}  T_{nm}(y)  
      W_{kn}(y) \di y \di x\\
    &= \beta \int_{\RR^2} \rho^2(x) \tau \psi^s (x)\cdot
      \overline{\eta^s(x)}  \di x,
\end{align}
with $\tau \in \CC^{2 \times 2}$ such that 
\begin{align*}
\tau_{km} \coloneqq \frac{1}{\beta}\int_Y \sum_{n=1}^2 (-1)^{n-1} W_{kn} (y)
  T_{nm}(y)   \di y.
\end{align*}
Clearly, $\tau  = \frac{1}{\beta}\int_Y W (y) \sigma_3 T(y) \di y$.
From \eqref{eq:31} and integration by parts, we obtain
\begin{align*}
  \tau_{km}
  &= \frac{1}{\beta}\int_Y \sum_{n=1}^2 (-1)^{n-1} (-\beta \Delta_{yy}
  T_{kn}(y)) T_{nm} (y) \di y \\
  &=  \int_Y \sum_{n=1}^2 (-1)^{n-1} \nabla_y T_{kn}(y) \nabla_y
    T_{nm} (y) \di y,
\end{align*}
which implies \eqref{eq:30}.

From \eqref{eq:19}, \eqref{eq:20}, and \eqref{eq:21}, 
\begin{align*}
  \begin{split}
    &\left\langle \left( D \cdot \sigma \right) \psi^s  + m \sigma_3 \psi^s - z \psi^s,
      \eta^s \right\rangle - \left\langle \beta \left( \nabla \sigma_3 \right) \psi^s,
      \nabla \eta^s \right\rangle + \left\langle \beta \rho^2 \tau \psi^s, \eta^s
      \right\rangle 
      = \left\langle f, \eta^s \right\rangle,
  \end{split}
\end{align*}
holds for all $\eta^s \in \calD(\RR^2;\CC^2)$. Therefore, 
\begin{align*}
  \left[ D \cdot \sigma + (m+\beta \Delta) \sigma_3 - z + \beta \rho^2
  \tau\right] \psi^s
  = f,
\end{align*}
or equivalently, 
\begin{align*}
(\opH^0 - z) \psi^s = f.
\end{align*}
By a similar argument as in \cref{lem:main-results-4}, we have
$\opH^0$ is a self-adjoint operator in $L^2(\RR^2;\CC^2)$ with domain
$\dom (\opH^0) = H^2(\RR^2;\CC^2)$. Therefore, the above homogenized
equation has a unique solution $\psi^s \in H^2(\RR^2;\CC^2)$, and
thus, the full sequence $\psi^{\varepsilon}$ converges to $\psi^s.$
\end{enumerate}
\end{proof}

\begin{proof}[Proof of \cref{lem:limit-absorpt-princ-1}]
By \eqref{eq:9},
\begin{align}
  \label{eq:50}
  \norm{\psi^s}_{H^1}
  \le \limsup_{\varepsilon \to 0} \norm{\psi^{\varepsilon,z}}_{H^1}
  \le C \abs{\Im z}^{-1} \norm{f}_{L^2}.
\end{align}
Taking Fourier transform of $(\opH^0-z)\psi^{s,z} = f$, we obtain 
\begin{align*}
\left( \xi \cdot \sigma + (m+\beta \abs{\xi}^2) \sigma_3 \right)\widehat{\psi^{s,z}} + \beta  \ftf \left( \rho^2 \tau \psi^{s,z} \right)- z \widehat{\psi^{s,z}}
  = \hat{f}.
\end{align*}
Multiplying both side by $\sigma_3$ and rearranging terms, we get
\begin{align}
\label{eq:49}
  \abs{\beta} \abs{\xi}^2 \abs{\widehat{\psi^{s,z}}}
  \le C \left( \abs{\hat{f}} + \abs{z + m} \abs{\widehat{\psi^{s,z}}}
  + \abs{\xi} \abs{\widehat{\psi^{s,z}}} + \abs{\beta}  \abs{\ftf \left(
  \rho^2 \tau \psi^{s,z}\right)} \right).
\end{align}
Square both sides, use Cauchy-Schwarz, integrate over $\RR^2$, then apply Plancherel theorem, to conclude that:
\begin{align*}
  \beta^2 \norm{\abs{\xi}^2 \abs{\widehat{\psi^{s,z}}}}^2
  &\le C \left( \norm{\hat{f}}^2 + \abs{z +m}^2
    \norm{\widehat{\psi^{s,z}}}^2 + \norm{\abs{\xi}
    \abs{\widehat{\psi^{s,z}}}}^2 + \beta^2  \norm{\ftf \left(
    \rho^2 \tau \psi^{s,z} \right)}^2 \right)\\
  &\le C \left( \norm{{f}}^2 + \abs{z +m}^2
    \norm{{\psi^{s,z}}}^2 + \norm{
    {\psi^{s,z}}}_{H^1}^2 + \beta^2  \norm{
    \rho^2 \tau \psi^{s,z} }^2 \right)\\
  &\le C \left( \norm{{f}}^2 + \abs{z +m}^2
    \norm{{\psi^{s,z}}}^2 + \norm{
    {\psi^{s,z}}}_{H^1}^2 + \beta^2  \norm{
    \rho^2  \psi^{s,z} }^2 \right),
\end{align*}
where in the last step, we use \eqref{eq:25}, \eqref{eq:30} and
elliptic regularity \cite{gilbargEllipticPartialDifferential2001} to have
$\abs{\tau} \le C \left( \beta, \norm{W}_{C^{0,\alpha}} \right)$.
Now the fact that $\rho \in L^{\infty}$ and \eqref{eq:50} imply 
\begin{align*}
 \beta^2 \norm{\abs{\xi}^2 \abs{\widehat{\psi^{s,z}}}}^2
  \le C \abs{\Im z}^{-2} \norm{f}^2,
\end{align*}
which leads to \eqref{eq:47b}.

Finally, to prove \eqref{eq:48}, we multiply \eqref{eq:49} by $\abs{\xi}$,
then apply the same argument as above. 
\end{proof}

\begin{proof}[Proof of \cref{thm:limit-absorpt-princ-4}]
  \begin{enumerate}[wide]
  \item We first consider the case
    $z \in [-\lambda,\lambda] + i ([-\gamma,\gamma] \setminus \left\{
      0 \right\})$ where $\lambda, \gamma$ are defined in
    \cref{thm:main-results-2}, so that
    \cref{lem:limit-absorpt-princ-1} can be used freely. Since the
    dependence of solutions on $z$ is not important for the following
    argument, we drop the superscript $^z$ to lighten the
    notation. Let $x = \frac{y}{\varepsilon}$. Then
    $\partial_i = \partial_{x_i} + \frac{1}{\varepsilon}
    \partial_{y_i}$.  Define
\begin{align*}
  \opK^{\varepsilon}
  &\coloneqq \opH^{\varepsilon} - z,\\
  \opK^0
  &\coloneqq \beta \Delta_{yy},\\
  \opK^1
  &\coloneqq
    D_y \cdot \sigma + 2\beta \nabla_x \cdot \nabla_y \sigma_3 +
    \rho(x)W(y),\\
  \opK^2
  &\coloneqq D_x \cdot \sigma + (m + \beta\Delta_{xx}) \sigma_3 - z.
\end{align*}
\item We first assume $f \in H^1(\RR^2;\CC^2)$. So, from
  \cref{lem:limit-absorpt-princ-1}, $\psi^s \in H^3(\RR^2;\CC^2).$
   Recall from \eqref{eq:31} that 
  \begin{align}
    \label{eq:38}
\psi^f = \rho(x) \sigma_3  T(y) \psi^s(x).
\end{align}
Consider the equation 
\begin{align}
\label{eq:26}
\opK^0 \psi^r = f - \opK^1 \psi^f - \opK^2 \psi^s,
\end{align}
with unknown $\psi^r \in L^2(\RR^2; H^1_{\per}(Y)/\RR)$.
From \eqref{eq:23}--\eqref{eq:30},
\begin{align*}
  f - \opK^2 \psi^s
  = f - \left( D_x \cdot \sigma + (m + \beta \Delta_{xx}) \sigma_3 -
  z \right) \psi^s = \beta \rho^2 \tau \psi^s.
\end{align*}
Thus \eqref{eq:26} becomes 
\begin{align}
\label{eq:33}
  \opK^0 \psi^r
  = \beta \rho^2(x) \tau \psi^s(x) - \left( D_y \cdot \sigma  + 2\beta \nabla_x \cdot \nabla_y \sigma_3 +
    \rho(x)W(y) \right) \left[ \rho(x) \sigma_3 T(y) \psi^s(x) \right].
\end{align}
Observe that the right-hand side of \eqref{eq:33} is of the form 
$ \sum_{i=1}^6 h_i(x)k_i(y) $
with 
\begin{align}
\label{eq:34}
 \norm{h_i}_{H^2} +
  \norm{k_i}_{L^{\infty}}
  \le C (1+\norm{\psi^s}_{H^3})(1 +  \norm{\rho}_{W^{2,\infty}} +
  \norm{\rho}_{W^{1,\infty}} + \norm{W}_{C^{0,\alpha}})^2,
\end{align}
where $C$ is independent of $\varepsilon$. The exact formulae of $h_i,
k_i$ are not important, since we are only interested in their regularity. By choosing the ansatz
\begin{align}
  \label{eq:37}
  \psi^r(x,y) = \sum_{i=1}^6 h_i(x) \psi^{r,i} (y)
\end{align}
and solving for the solutions $\psi^{r,i}(y) \in H^1_{\per}(Y)/\RR$ of cell problems $\opK^0
\psi^{r,i} = k_i$, we conclude that \eqref{eq:26} has a unique
solution $\psi^r \in H^2(\RR^2;W^{1,\infty}_{\per}(Y;\CC^2)/\CC)$ (note that
$\psi^r \in H^2$ because $\psi^s \in H^3$).
\item Let 
\begin{align*}
  Z^{\varepsilon}(x)
  \coloneqq \psi^{\varepsilon} - \left( \psi^s + \varepsilon \psi^f +
  \varepsilon^2 \psi^r \right) \left( x, \frac{x}{\varepsilon} \right).
\end{align*}
Then, 
\begin{align*}
  \begin{split}
    \opK^{\varepsilon} Z^{\varepsilon}
  &= \opK^{\varepsilon} \psi^{\varepsilon} - \opK^{\varepsilon} \left(
    \psi^s + \varepsilon \psi^f + \varepsilon^2 \psi^r \right)\\
    &= f - \left( \frac{1}{\varepsilon^2} \opK^0 +
      \frac{1}{\varepsilon}\opK^1 + \opK^2 \right) \left( \psi^s +
      \varepsilon \psi^f + \varepsilon^2 \psi^r \right)\\
    &= \left[ f - (\opK^0 \psi^r + \opK^1 \psi^f + \opK^2 \psi^s)
      \right] - \frac{1}{\varepsilon^2} \opK^0 \psi^s -
      \frac{1}{\varepsilon} \left( \opK^0\psi^f + \opK^1 \psi^s
      \right)\\
    &\quad- \varepsilon \left( \opK^1 \psi^r + \opK^2 \psi^f
      \right) - \varepsilon^2 \opK^2 \psi^r\ \ =\ - \varepsilon \left( \opK^1 \psi^r + \opK^2 \psi^f
      \right) - \varepsilon^2 \opK^2 \psi^r.
  \end{split}
\end{align*}
The first three terms vanish because of \eqref{eq:26}, the
fact that $\psi^s$ is independent of $y$, and the cell problem
\eqref{eq:25}. Therefore, $Z^{\varepsilon}$ satisfies 
\begin{align}
  \label{eq:36}
  \opK^{\varepsilon} Z^{\varepsilon}
  = - \varepsilon \left( \opK^1 \psi^r + \opK^2 \psi^f
      \right) - \varepsilon^2 \opK^2 \psi^r.
\end{align}
From \eqref{eq:38}, \eqref{eq:34}, \eqref{eq:37}, and \eqref{eq:23}, we
conclude that 
\begin{align*}
  &\norm{\opK^1 \psi^r +\opK^2 \psi^f - \varepsilon \opK^2 \psi^r}_{L^2}\\
  &\qquad\le C (1+\norm{\psi^s}_{H^2} + \varepsilon \norm{\psi^s}_{H^3})(1 +  \norm{\rho}_{W^{2,\infty}} +
  \norm{\rho}_{W^{1,\infty}} + \norm{W}_{C^{0,\alpha}})^2.
\end{align*}
Applying \eqref{eq:9} to \eqref{eq:36}, we obtain 
\begin{align}
  \label{eq:39}
  \begin{split}
    \norm{Z^{\varepsilon}}_{H^1}
    &\le C \abs{\Im z}^{-1} \norm{ - \varepsilon \left( \opK^1 \psi^r + \opK^2 \psi^f
      \right) - \varepsilon^2 \opK^2 \psi^r }\\
    &\le C \abs{\Im z}^{-1} \varepsilon  (1+\norm{\psi^s}_{H^2}+ \varepsilon \norm{\psi^s}_{H^3}) (1 +  \norm{\rho}_{W^{2,\infty}} +
      \norm{\rho}_{W^{1,\infty}} + \norm{W}_{C^{0,\alpha}})^2\\
    & \le C \abs{\Im z}^{-1} \varepsilon  (1+\norm{\psi^s}_{H^2}+ \varepsilon \norm{\psi^s}_{H^3})\\
   &\le C \abs{\Im z}^{-2} \varepsilon  (1+\norm{f} + \varepsilon
     \norm{f}_{H^1}),
  \end{split}
\end{align}
where we have used \cref{lem:limit-absorpt-princ-1}
in the last estimate.
When $\norm{f} \le 1$, we obtain 
\begin{align*}
 \norm{Z^{\varepsilon}}_{H^1}
   \le C \abs{\Im z}^{-2} \varepsilon  (2 + \varepsilon \norm{f}_{H^1}),
\end{align*}
so
\begin{align*}
\limsup_{\varepsilon \to 0}
  \left( \sup_{z \in [-\lambda,\lambda] + i \left(
  [-\gamma,\gamma]\setminus \left\{ 0 \right\}
  \right)}\frac{\norm{Z^{\varepsilon}}_{H^1}}{\abs{\Im z}^{-2}
  \varepsilon}  \right)
  \le \limsup_{\varepsilon \to 0} C  (2 +
  \varepsilon \norm{f}_{H^1})  \le 2C.
\end{align*}
Thus, there exists $\varepsilon_0' > 0$ depending on $\lambda,  m, \beta, \norm{\rho}_{W^{1,\infty}},
\norm{\rho}_{W^{2,\infty}},$ and $\norm{W}_{C^{0,\alpha}}$ such that  
\begin{align*}
\sup_{\substack{\varepsilon \in (0,\varepsilon_0')\\z \in [-\lambda,\lambda] + i \left(
  [-\gamma,\gamma]\setminus \left\{ 0 \right\}
  \right)}}\frac{\norm{Z^{\varepsilon}}_{H^1}}{\abs{\Im z}^{-2}
  \varepsilon} = 
  \sup_{\varepsilon
\in (0,\varepsilon_0')} \left( \sup_{z \in [-\lambda,\lambda] + i \left(
  [-\gamma,\gamma]\setminus \left\{ 0 \right\}
  \right)}\frac{\norm{Z^{\varepsilon}}_{H^1}}{\abs{\Im z}^{-2}
  \varepsilon}  \right)  \le 2C + 1,
\end{align*}
or 
\begin{align}
\label{eq:53}
  \norm{Z^{\varepsilon}}_{H^1}
  \le C \abs{\Im z}^{-2} \varepsilon,
\end{align}
for any $\varepsilon
\in (0,\varepsilon_0'),$ $z \in [-\lambda,\lambda] + i \left(
  [-\gamma,\gamma]\setminus \left\{ 0 \right\}
  \right),$ and $f \in H^1(\RR^2;\CC^2)$ with $\norm{f} = 1$. 
\item  Since $\norm{\psi^{\varepsilon} - (\psi^s + \varepsilon \psi^f +
  \varepsilon^2 \psi^r)}
  \le \norm{Z^{\varepsilon}}_{H^1}$, from \eqref{eq:53}, we have
\begin{align*}
  \norm{\psi^{\varepsilon} - \psi^s}
  &\le C \abs{\Im z}^{-2} \varepsilon + \varepsilon \norm{\psi^f} +
    \varepsilon^2 \norm{\psi^r}.
\end{align*}
From \eqref{eq:38}, \eqref{eq:37}, and \eqref{eq:47b}, there exists $C = C (\lambda, m, \beta,
\norm{\rho}_{W^{1,\infty}}, \norm{W}_{C^{0,\alpha}}) > 0$ such that
$\norm{\psi^f} \le C \abs{\Im z}^{-1} \norm{f} = C \abs{\Im z}^{-1}$
and $\norm{\psi^r} \le C \abs{\Im z}^{-1} \norm{f} = C \abs{\Im
  z}^{-1}$. Therefore, 
\begin{align*}
\norm{\psi^{\varepsilon} - \psi^s} \le C \abs{\Im z}^{-2}\varepsilon +
  \varepsilon \abs{\Im z}^{-1} + \varepsilon^2 \abs{\Im z}^{-1}.
\end{align*}
Without loss of generality, we assume $0 < \varepsilon_0' \le 1$ and $0 < {\gamma} \le 1$ so $\Im z
\in [-\gamma,\gamma] \setminus \left\{ 0 \right\}$ implies $\abs{\Im z}^{-2} \ge \abs{\Im
  z}^{-1}$. Hence, 
\begin{align}
\label{eq:58}
\norm{\psi^{\varepsilon} - \psi^s} \le C \abs{\Im z}^{-2}\varepsilon,
\end{align}
for any $\varepsilon
\in (0,\varepsilon_0')$ and $f \in H^1(\RR^2;\CC^2)$ with $\norm{f} = 1$.
Because $\psi^{\varepsilon} = (\opH^{\varepsilon}-z
  )^{-1} f,~\psi^s = (\opH^{0} - z)^{-1}f$, 
we conclude from \eqref{eq:58} that 
\begin{align*}
\norm{(\opH^{\varepsilon}-z)^{-1} f- (\opH^{0} - z)^{-1}f}
  \le C \varepsilon \abs{\Im z}^{-2},
\end{align*}
for any $\varepsilon
\in (0,\varepsilon_0')$ and $f \in H^1(\RR^2;\CC^2)$ with $\norm{f} =
1$. For $0 \ne g \in H^1(\RR^2;\CC^2)$, let $f = \frac{g}{\norm{g}}$. Then, the above estimate implies
\begin{align}
  \label{eq:40}
\norm{(\opH^{\varepsilon}-z)^{-1} g- (\opH^{0} - z)^{-1}g}
  \le C \varepsilon \abs{\Im z}^{-2}\norm{g},
\end{align}
for any $\varepsilon
\in (0,\varepsilon_0'),$ $z \in [-\lambda,\lambda] + i \left(
  [-\gamma,\gamma]\setminus \left\{ 0 \right\} \right)$ and $g \in H^1(\RR^2;\CC^2)$.
Since $H^1(\RR^2;\CC^2)$ is dense in $L^2(\RR^2;\CC^2),$ we conclude
that \eqref{eq:40} also holds for $g \in L^2(\RR^2;\CC^2).$ Therefore, 
\begin{align}
\label{eq:59}
\norm{(\opH^{\varepsilon}  - z )^{-1} - (\opH^{0} - z
  )^{-1}}_{L^2 \to L^2}
  \le  C \varepsilon \abs{\Im z}^{-2},
\end{align}
for any $\varepsilon
\in (0,\varepsilon_0'),$ $z \in [-\lambda,\lambda] + i \left(
  [-\gamma,\gamma]\setminus \left\{ 0 \right\} \right)$.

\item We now consider $z \in [-\lambda, \lambda ] + i (\RR\setminus
  \left\{ 0 \right\})$. By \cite[Lemma
  2.6.1]{daviesSpectralTheoryDifferential1995} and \eqref{eq:59}, we
  obtain 
\begin{align*}
&\norm{(\opH^{\varepsilon}  - z )^{-1} - (\opH^{0} - z
  )^{-1}}_{L^2 \to L^2}\\
&\quad= \gamma^{-1} \norm{(\gamma^{-1}\opH^{\varepsilon}  -\gamma^{-1} z )^{-1} - (\gamma^{-1}\opH^{0} - \gamma^{-1}z
  )^{-1}}_{L^2 \to L^2}\\
  &\quad\le \gamma^{-1} \frac{9 \left( 1 + \abs{\gamma^{-1} z}^2
    \right)}{\abs{\gamma^{-1} \Im z}^2} \norm{(\gamma^{-1}
    \opH^{\varepsilon} + i)^{-1} - (\gamma^{-1} \opH^0 + i)^{-1}}\\
  &\quad=  \frac{9 \left( \gamma^{2} + \abs{ z}^2
    \right)}{\abs{ \Im z}^2} \norm{(
    \opH^{\varepsilon} + i\gamma)^{-1} - ( \opH^0 + i\gamma)^{-1}}\\
  &\quad\le 9 \left( (\lambda^2 + \gamma^2)\abs{\Im z}^{-2} + 1\right)
    C \varepsilon \abs{\gamma}^{-2}\ \ \le \ C \left( 1+ \abs{\Im z}^{-2} \right) \varepsilon.
\end{align*}
\end{enumerate}
\end{proof}
\begin{remark}
  \label{sec:main-results-3}
  Estimate \eqref{eq:53} implies the rate of convergence 
\begin{align}
\label{eq:41}
  \norm{\psi^{\varepsilon} - \psi^s - \varepsilon \psi^f}_{H^1}
  \le C \varepsilon.
\end{align}
Note  that our problem does not have a boundary layer effect, so we
obtain the rate $O(\varepsilon)$ instead of
$O(\varepsilon^{\frac{1}{2}})$ as in classical results \cite{bensoussanAsymptoticAnalysisPeriodic2011}.
\end{remark}

\begin{proof}[Proof of \cref{thm:stab}]
Using the Helffer-Sj\"ostrand formula (as used, e.g., in \cite{QB22}), we obtain
\begin{equation}\label{eq:HS}
   U(\opH^{\varepsilon}) - U(\opH^{l}) = S(\opH^{\varepsilon}) - S(\opH^{l}) = -\frac{1}{\pi} \int_{\CC} \bar\partial \tilde S(z) \Big( (z-\opH^{\varepsilon})^{-1} - (z-\opH^{l})^{-1}\Big) dz
\end{equation}
with $dz$ Lebesgue measure on $\CC$ and $\tilde S$ an almost analytic
extension of $S$ chosen so that $\bar\partial \tilde S(z)$ is
compactly supported in $[-\lambda,\lambda] + i \RR$ and such that $|\Im z|^{-N}|\bar\partial \tilde
S(z)|\leq C_N$ is bounded uniformly on that support for each $N\geq0$.
Here, the superscript $l$ stands for either limit $l=0$ or $l=\infty$.

From \cref{thm:limit-absorpt-princ-4}, we have
\[
  \norm{(z-\opH^{\varepsilon})^{-1} - (z-\opH^{0})^{-1} } \leq C (1+|\Im
  z|^{-2}) \varepsilon,\quad 0 < \varepsilon \leq \varepsilon_0'
\]
and from the resolvent identity (note that $\opH^{\varepsilon} =
\opH^{\infty} + \frac{1}{\varepsilon} \rho(x) W \left(
  \frac{x}{\varepsilon} \right)$) and the resolvent estimate,
\[
  \norm{(z-\opH^{\varepsilon})^{-1} - (z-\opH^{\infty})^{-1} } \leq C |\Im z|^{-2} \frac1\varepsilon ,\quad 0<  \varepsilon_1' \leq \varepsilon.
\]
Combined with the Helffer-Sj\"ostrand formula \eqref{eq:HS}, this implies that 
\begin{align}
\label{eq:56}
\norm{ U(\opH^{\varepsilon}) - U(\opH^{0})} \leq C \varepsilon,\qquad  \norm{ U(\opH^{\varepsilon}) - U(\opH^{\infty})} \leq C \frac 1\varepsilon,
\end{align}
for $\varepsilon$ sufficiently small for the first estimate and sufficiently large for the second estimate.

From their definition, 
\begin{align}
\label{eq:55}
T^{\varepsilon}- T^l= P(x) \left( U(\opH^{\varepsilon}) - U(\opH^l) \right) P(x).
\end{align}
Since the set $\calF(L^2(\RR^2;\CC^2),L^2(\RR^2;\CC^2))$ of Fredholm
operators from $L^2(\RR^2;\CC^2)$ to $L^2(\RR^2;\CC^2)$ is an open
set in the space of bounded operators
$\calB (L^2(\RR^2;\CC^2),L^2(\RR^2;\CC^2))$, we obtain from
\eqref{eq:56} and \eqref{eq:55} that $T^\varepsilon$ is Fredholm for
$\varepsilon$ sufficient large and $\varepsilon$ sufficient
small. Moreover, since the
$\Index \colon \calF(L^2(\RR^2;\CC^2),L^2(\RR^2;\CC^2)) \to \ZZ $ is a
continuous function, which is locally constant on the connected
components of $\calF(L^2(\RR^2;\CC^2),L^2(\RR^2;\CC^2))$
(cf. e.g. \cite[Section 5.2]{cheverryGuideSpectralTheory2021}), we obtain \eqref{eq:indexeps}.
\end{proof}

\section*{Acknowledgment} GB's work was supported in part by the US National Science Foundation Grants DMS-2306411 and DMS-1908736.

\bibliographystyle{siam}
\bibliography{homogenisation,topological-insulator,spectral-refs,bibTI,ref}

\end{document}